\newcommand*\linenomathpatch[1]{%
   \expandafter\pretocmd\csname #1\endcsname {\linenomath}{}{}%
   \expandafter\pretocmd\csname #1*\endcsname{\linenomath}{}{}%
   \expandafter\apptocmd\csname end#1\endcsname {\endlinenomath}{}{}%
   \expandafter\apptocmd\csname end#1*\endcsname{\endlinenomath}{}{}%
 }
\newcommand*\linenomathpatchAMS[1]{%
    \expandafter\pretocmd\csname #1\endcsname {\linenomathAMS}{}{}%
    \expandafter\pretocmd\csname #1*\endcsname{\linenomathAMS}{}{}%
    \expandafter\apptocmd\csname end#1\endcsname {\endlinenomath}{}{}%
    \expandafter\apptocmd\csname end#1*\endcsname{\endlinenomath}{}{}%
}
\let\linenomathAMS\linenomathWithnumbers
\patchcmd\linenomathAMS{\advance\postdisplaypenalty\linenopenalty}{}{}{}
\let\linenomathAMS\linenomathNonumbers
\newtheorem{theorem}             {Theorem}[section]
\newtheorem{lemma}   [theorem]   {Lemma}
\newtheorem{proposition}  [theorem] {Proposition}   
\newtheorem{corollary}  [theorem] {Corollary}
\newtheorem{observation}[theorem] {Observation}   
\newtheorem{proto-theo}[theorem] {Proto-Theorem}
\def\Pr{\mathop{\mathbb{P}}\nolimits}
\let\phi=\varphi
\newcommand{\cC}{\mathcal{C}}
\newcommand{\cF}{\mathcal{F}}
\newcommand{\cH}{\mathcal{H}}
\newcommand{\cI}{\mathcal{I}}
\newcommand{\cP}{\mathcal{P}}
\newcommand{\cR}{\mathcal{R}}
\newcommand{\cS}{\mathcal{S}}
\newcommand{\cW}{\mathcal{W}}
\newcommand{\cZ}{\mathcal{Z}}
\def\bbn{{\mathbb N}}
\def\NN{{\mathbb N}}
\def\cankvdW{\textrm{\rm can-$k$-vdW}}
\def\rkvdW{\mathop{\textrm{\rm $(r,k)$-vdW}}\nolimits}
\def\aksz{\mathop{\textrm{\rm $(\alpha,k)$-Sz}}\nolimits}
\def\akrb{\mathop{\textrm{\rm $(\alpha,k)$-rb}}\nolimits}
\def\hkap{\cH_{k\text{{\rm-AP}}}}
\let\epsilon\varepsilon
\let\eps\epsilon
\crefname{equation}{}{}
\title[A sparse canonical van der Waerden theorem]%
 {A sparse canonical van der Waerden theorem}
\author[Alvarado]{José~D.~Alvarado}
\address[J.\,D. Alvarado]{Faculty of Mathematics and Physics, University of Ljubljana, Slovenia}
\email{jose.alvarado@fmf.uni-lj.si}
\author[Kohayakawa]{Yoshiharu Kohayakawa}
\address[Y. Kohayakawa and G.\,O. Mota]{Instituto de Matem\'atica e Estat\'{\i}stica \\Universidade de 
	S\~ao Paulo, Rua do Mat\~ao 1010\\05508--090~S\~ao Paulo, Brazil}
\email{\{\,yoshi\,|\,mota\,\}@ime.usp.br}
\author[Morris]{Patrick Morris}
\address[P. Morris and M. Ortega]{Departament de Matem\`atiques, Universitat Polit\`ecnica de Catalunya (UPC), Carrer de Pau Gargallo 14, 08028  Barcelona, Spain}
\email{pmorrismaths@gmail.com, miquel.ortega.sanchez-colomer@upc.edu}
\author[Mota]{Guilherme~O.~Mota}
\author[Ortega]{Miquel Ortega}
\thanks{J. D. Alvarado was partially supported by FAPESP
  (2020/10796-0) and the European Union (ERC, KARST, project number
  101071836). Y. Kohayakawa was partially supported by FAPESP (2023/03167-5) and
  CNPq (407970/2023-1, 420838/2025-2, 315258/2023-3).
  P. Morris was supported by the Deutsche Forschungsgemeinschaft (DFG, German
Research Foundation) Walter Benjamin programme - project number 504502205 and by the European Union's Horizon Europe   Marie Sk{\l}odowska-Curie grant RAND-COMB-DESIGN - project number
101106032 {\euflag}. G. O. Mota was partially supported by CNPq
		(315916/2023-0,  420838/2025-2) and FAPESP
                (2023/03167-5, 2024/13859-4). M. Ortega was supported by the project PID2023-147202NB-I00, funded by MICIU/AEI/10.13039/501100011033/, as well as the FPI grant PRE2021-099120. This study was financed in part by CAPES, Brazil, Finance
		Code~001.}
\date{\today}	
\begin{document}
\onehalfspace

\begin{abstract}
 The canonical van der Waerden theorem asserts that, for sufficiently large~$n$, every colouring of $[n]$ contains either a monochromatic or a rainbow arithmetic progression of length $k$ ($k$-AP, for short). In this paper, we determine the threshold at which the binomial random subset $[n]_p$ almost surely inherits this canonical Ramsey type property. As an application, we show the existence of sets $A\subseteq [n]$ such that the $k$-APs in $A$ define a $k$-uniform hypergraph of arbitrarily high girth and yet any colouring of $A$ induces a monochromatic or rainbow $k$-AP. 
\end{abstract}

\maketitle

\section{Introduction}

A $k$-term arithmetic progression ($k$-AP for short) in the integers is a sequence $a,a+d,\ldots,a+(k-1)d$ for some $a,\,d\in \NN$ (in particular $d\neq 0$). One of the first and most celebrated results in  Ramsey theory is van der Waerden's theorem \cite{1927.vdW}, which states that for any $r,k \in \NN$, if $n\in \NN$ is sufficiently large, then  any $r$-colouring of $[n]$ results in a monochromatic $k$-AP. Erd\H{o}s and Graham \cite{1980.EG} explored what happens when one removes the restriction of using a bounded number of colours. They proved the beautiful \textit{canonical van der Waerden theorem} which states that for $k\in \NN$, if $n$ is sufficiently large then \textit{any} colouring of $[n]$ results in a $k$-AP which is \textsl{either} monochromatic   \textsl{or} \emph{rainbow}, with each element in the $k$-AP having a unique colour. 

In this paper, we prove a sparse random version of the canonical van der Waerden theorem.  The binomial random set $[n]_p$ is the set obtained by keeping each integer $x\in[n]$ independently with probability $p=p(n)\in [0,1]$. 
We say that a set $A\subseteq \NN$ is $\cankvdW$ if it has the canonical van der Waerden property, that is, any colouring of $A$ gives either a monochromatic or a rainbow $k$-AP. Our main theorem determines the threshold for $[n]_p$ to have this   property.

\begin{theorem}  \label{thm:random_canonical_vdw}
  Let $k \geq 3$ be an integer. Then there exist $c,\,C>0$ such that 
  \setcounter{equation}{-1}
  \begin{linenomath}
 \begin{numcases}
 {\lim_{n \to \infty} \Pr([n]_p  \mbox{ is } \cankvdW) =}
	  \label{can random 0} 0 &if $p < c n^{-1/(k-1)}$;\\ 
	 \label{can random 1}    1  &if $p > C n^{-1/(k-1)}.$ 
 \end{numcases}   
    \end{linenomath}
   \end{theorem}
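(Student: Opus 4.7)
The plan is to handle the 0-statement and 1-statement separately; the former reduces to a known result, while the latter carries the technical weight.

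\emph{0-statement.} A colouring using only two colours automatically avoids rainbow $k$-APs when $k\geq3$. So it suffices to produce a $2$-colouring of $[n]_p$ with no monochromatic $k$-AP, and the existence of such a colouring below $cn^{-1/(k-1)}$ is precisely the 0-statement of the ordinary random van der Waerden theorem of R\"odl--Ruci\'nski (for two colours), which has the same threshold $n^{-1/(k-1)}$. This step is thus a direct reduction, via the standard deletion/alterations argument applied to the hypergraph of $k$-APs meeting $[n]_p$.

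\emph{1-statement.} My plan is a colour-concentration dichotomy. Fix a sufficiently large integer $T=T(k)$, set $m=|[n]_p|$, and consider an arbitrary colouring $\chi$ of $[n]_p$. Call a colour class \emph{big} if it has at least $m/T$ elements and \emph{small} otherwise; let $G\subseteq[n]_p$ be the union of the big classes and $B=[n]_p\setminus G$. There are at most $T$ big classes, so one of $|G|\geq m/2$ or $|B|\geq m/2$ holds. In the first case $\chi|_G$ uses at most $T$ colours, and I would invoke a robust form of the R\"odl--Ruci\'nski $T$-colour random van der Waerden theorem, stating that a.a.s.\ every $T$-colouring of every subset of $[n]_p$ of size at least $m/2$ contains a monochromatic $k$-AP. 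Such a robust version is part of the standard output of the hypergraph container or Schacht transference proof of R\"odl--Ruci\'nski, at the same threshold $n^{-1/(k-1)}$.

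In the second case each colour appearing in $\chi|_B$ has density less than $1/T$ in $B$, so $\chi|_B$ is highly spread. Here I would invoke a \emph{sparse rainbow van der Waerden theorem}: a.a.s., for $p\geq Cn^{-1/(k-1)}$, every colouring of $[n]_p$ in which each colour class has size less than $m/T$ contains a rainbow $k$-AP. My plan would be to prove this via the hypergraph container method applied to $\hkap$, the $k$-uniform hypergraph of $k$-APs on $[n]$: a spread rainbow-free colouring decomposes $[n]$ into many small $k$-AP-free sets; the number of such decompositions can be bounded by combining containers for $\hkap$ with a dense rainbow van der Waerden theorem (for instance that of Jungi\'c, Licht, Mahdian, Ne\v{s}et\v{r}il and Radoi\v{c}i\'c for $k=3$ and its extensions for $k\geq 4$); and a union bound shows that the probability $[n]_p$ admits any such colouring is $o(1)$.

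The main obstacle is the sparse rainbow statement itself. Rainbow-freeness is a global constraint on the whole colouring rather than a local mono-coloured configuration, so the usual container/transference machinery does not plug in directly. Bridging this gap -- either by encoding rainbow-freeness as an edge-colouring Ramsey condition on $\hkap$ for which container-style lemmas are available, or by a supersaturation argument producing many almost-disjoint $k$-APs on which to force a rainbow one -- is where the bulk of the new work is expected to lie.
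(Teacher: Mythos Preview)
Your $0$-statement and your overall dichotomy for the $1$-statement (a dense colour class gives a monochromatic $k$-AP via random Szemer\'edi; an everywhere-sparse colouring should yield a rainbow $k$-AP) are exactly what the paper does. The gap you flag in the rainbow half is genuine, and the specific route you suggest for closing it does not work as stated: a spread rainbow-free colouring does \emph{not} decompose the ground set into $k$-AP-free colour classes (monochromatic $k$-APs are perfectly allowed), so containers for $\hkap$ applied colour-by-colour give no traction.

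The paper closes this gap with two ideas you are missing. First, a \emph{colour-merging} step: an $\alpha$-bounded colouring can be merged (greedily) into an $\alpha$-bounded $r$-colouring with $r\leq 4/\alpha$, and merging preserves rainbow-freeness. This reduces the problem to a \emph{fixed} palette $[r]$. Second, containers are applied not to $\hkap$ but to the \emph{rainbow hypergraph} $\cR=\cR(n,k,r)$ on vertex set $[r]\times[n]$, whose edges are rainbow $k$-APs; now a rainbow-free $r$-colouring of $Z\subseteq[n]$ is literally an independent set in $\cR$, so the container machinery applies directly. The nontrivial remaining step is a structural lemma about the containers: using supersaturation for rainbow $k$-APs in bounded colourings (itself an easy consequence of supersaturated Szemer\'edi), one shows each container either projects to few integers, or there is a large set $B\subseteq[n]$ on which the container uses only $O_k(1)$ colours. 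In the second case, an $\alpha$-bounded colouring of $[n]_p$ can land only $O(\alpha\,|[n]_p|)$ points in $B$, which is exponentially unlikely; the union bound over fingerprints then goes through as in Nenadov--Steger.
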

As we shall soon see, the \ref{can random 0}-statement\footnote{The
  $0$-statement is the assertion that the limiting probability is~$0$ if~$p$ is as
in~\eqref{can random 0}.  We define the $1$-statement analogously.}
 is in fact immediate from previous results and our contribution is to give the matching \ref{can random 1}-statement and thus establish the threshold. 

\subsection{Sparse van der Waerden theorems}
\label{subsection:sparse_vdw}
To put Theorem \ref{thm:random_canonical_vdw} in context, we return to the original setting of van der Waerden's theorem. We say that a set $A\subseteq \NN$ is  $\rkvdW$  if any $r$-colouring of $A$ gives a monochromatic $k$-AP. Whilst van der Waerden's theorem tells us that there is some $W=W(r,k)\in \NN$ such that $[n]$ is $\rkvdW$ for any $n\geq W$, in the 1970s researchers began to question whether there are sparser sets $A\subseteq [n]$ which are also $\rkvdW$. If $A$ contains some arithmetic progression $S$ of length $W$, then certainly $A$ will be $\rkvdW$ just by considering the colouring on $S$. Therefore a first test for finding interesting sparse constructions that are $\rkvdW$ would be to avoid large arithmetic progressions in $A$.
Taking this to the extreme, Erd\H{o}s \cite{1974.E} asked if for all $r,k\in \NN$, there is some set $A$ which contains no $(k+1)$-AP and is still $\rkvdW$. These were then shown to exist independently by Spencer \cite{1975.Spencer} and Ne\v{s}et\v{r}il and R\"odl \cite{1976.NR}. %

To push further the sparsity condition on $A\subseteq [n]$, we consider $\hkap(A)$ which we define to be  the $k$-uniform hypergraph with vertex set $A$
 whose edges are the $k$-APs contained in $A$. Intuitively, in order for a set
 $A$ to be $\rkvdW$, you need that the $k$-APs in $A$ intersect significantly.
 On the other hand, one can enforce local sparsity of the $k$-APs in $A$ by
 forbidding short cycles in $\hkap(A)$.
 Here, a cycle~$\cC$ of length $2\leq\ell\in \NN$  in a hypergraph is
 a subgraph with distinct vertices $v_1,\ldots,v_\ell\in V(\cC)$ and
 an ordering of $E(\cC)$ as $e_1,\ldots,e_\ell$ such that $v_i\in
 e_i\cap e_{i+1}$ for $i=1,\ldots,\ell=|E(
 \cC)|$ (with $e_{\ell+1}=e_1$
 here). 
 The \emph{girth} of a hypergraph $\cH$ is the
    minimum length of a cycle that is a subgraph of~$\cH$. 
 Spencer \cite{1975.Spencer} conjectured that for any $r,k,g\in \NN$, there are
 sets $A\subseteq \NN$ that are $\rkvdW$ such that the girth of $\hkap(A)$ is at
 least~$g$. Note that $(k+1)$-APs in $A$ induce cycles of length two in $\hkap(A)$ and so Spencer's conjecture is a broad strengthening of the results mentioned above. The solution to the conjecture took  some time, eventually being proven in 1990  by R\"odl \cite{1990.R} (see also Ne\v{s}et\v{r}il and R\"odl \cite{1990.NR}).

Whilst the constructions giving sparse $\rkvdW$ sets listed above are all constructive, for the analogous problem in graphs, some early progress \cite{1986.FR} was made using random graphs. 
This indicated the use of random structures to give interesting examples of sparse constructions with Ramsey type properties and quickly led to the systematic study \cite{1992.LRV} of thresholds for these properties in random environments. In seminal work, R\"odl and Ruci\'nski \cite{1995.RR} determined many of these thresholds. In particular, answering a question of Lefmann and independently Erd\H{o}s and S\'os, they proved the following.

\begin{theorem}{\cite{1995.RR}}    \label{thm:random_vdw}
  Let $k\geq 3$ and $r\geq 2$ be integers. Then there exist $c,\,C>0$ such that 
  \setcounter{equation}{-1}
 \begin{numcases}
 {\lim_{n \to \infty} \Pr([n]_p  \mbox{ is } \rkvdW) =}
	  \label{random 0} 0 &if $p < c n^{-1/(k-1)}$;\\ 
	 \label{random 1}    1  &if $p > C n^{-1/(k-1)}.$ 
 \end{numcases}   
   \end{theorem}

At the location $n^{-1/(k-1)}$ of the threshold, one expects the
number of $k$-APs to be of the same order of magnitude as the number
of elements in $[n]_p$. Thus, on average, each element of $[n]_p$ lies
in constantly many $k$-APs. The \ref{random 1}-statement of Theorem~\ref{thm:random_vdw} shows that as soon as this average is large
enough, asymptotically almost surely (a.a.s.\ for short) $[n]_p$ will be $\rkvdW$. Thus Theorem \ref{thm:random_vdw} gives very sparse $\rkvdW$ sets. In fact using Theorem \ref{thm:random_vdw}, R\"odl and Ruci\'nski could reprove probabilistically the sparse van der Waerden results listed above.

\subsection{Sparse canonical van der Waerden theorems} Our main theorem, Theorem \ref{thm:random_canonical_vdw}, shows that the location of the threshold for the canonical van der Waerden property coincides with the threshold for the $r$-colour van der Waerden property. In fact, the \ref{can random 0}-statement of Theorem \ref{thm:random_canonical_vdw} follows easily from the \ref{random 0}-statement of Theorem \ref{thm:random_vdw}. Indeed for any $3\leq k\in \NN$ there is some $c>0$ such that if  $p < c n^{-1/(k-1)}$, then a.a.s.\ by Theorem \ref{thm:random_vdw} there is some 2-colouring of $[n]_p$ such that there are no monochromatic $k$-APs. Such a colouring also avoids rainbow $k$-APs simply because there are only 2 colours used. 
As in the $r$-colour van der Waerden case, we can also use Theorem \ref{thm:random_canonical_vdw} to derive the existence of locally sparse sets that have the canonical van der Waerden property. 

\begin{corollary} \label{cor:sparse_vdw}
    For any integers $3\leq k,\,g\in \NN$, there is some set $A\subseteq \NN$
    which is $\cankvdW$ such that $\hkap(A)$ has girth at least $g$. 
\end{corollary}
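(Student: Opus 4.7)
The plan is the standard deletion method of R\"odl and Ruci\'nski~\cite{1995.RR}, applied with Theorem~\ref{thm:random_canonical_vdw} as the probabilistic ingredient. Fix $k,g\geq 3$, let $C$ be the constant from Theorem~\ref{thm:random_canonical_vdw}, and take $p=Cn^{-1/(k-1)}$ with $n$ large, so that $[n]_p$ is a.a.s.\ $\cankvdW$. The goal is to delete from $[n]_p$ only a bounded number of vertices in order to destroy every cycle of length less than $g$ in $\hkap([n]_p)$, and then to verify that the canonical property survives this deletion.

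The main counting step is to bound the expected number of short cycles in $\hkap([n]_p)$. Any cycle of length $\ell$ on $v$ vertices satisfies $v\leq \ell(k-1)$, with equality exactly when consecutive $k$-APs share a single vertex and no other overlaps occur. Parametrising such a maximum-vertex cycle---first $k$-AP in $O(n^2)$ ways, each of the next $\ell-2$ $k$-APs chosen through the previously shared vertex in $O(n)$ ways, and the closing $k$-AP determined up to $O(1)$ choices by its two shared endpoints---gives $O(n^{\ell})$ cycles of length $\ell$ on $\ell(k-1)$ vertices in $\hkap([n])$, and hence expected number $O(n^{\ell}p^{\ell(k-1)})=O(1)$ in $\hkap([n]_p)$; cycles with extra overlap have strictly smaller vertex count and are negligible at this threshold. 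Summing over $2\leq\ell<g$, the expected total number $Y$ of short cycles is $O_{k,g}(1)$. By Markov's inequality there is a constant $M=M(k,g)$ with $\Pr[Y\leq M]\geq 1/2$, and combining with Theorem~\ref{thm:random_canonical_vdw} yields, with positive probability, a realisation $X$ of $[n]_p$ that is $\cankvdW$ and contains at most $M$ short cycles. Deleting one vertex from each short cycle then produces a set $A\subseteq X$ with $|A|\geq |X|-M$ and $\hkap(A)$ of girth at least $g$.

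The main obstacle is to show that $A$ is still $\cankvdW$, since canonicality is not automatically inherited under vertex deletion. The way around this is to strengthen Theorem~\ref{thm:random_canonical_vdw} to a \emph{robust} version: for $C$ sufficiently large, a.a.s.\ every subset of $[n]_p$ obtained by removing at most $M$ vertices is also $\cankvdW$. This kind of robustness is the standard byproduct of $1$-statement proofs in this area, typically established via supersaturation, transference, or hypergraph container arguments that produce an overwhelmingly large family of canonical witnesses inside $[n]_p$, of which only $O(1)$ can be disturbed by removing $O(1)$ vertices. With this robust strengthening in place, applying it to $[n]_p$ and then deleting the short-cycle vertices as above yields the set $A$ required by Corollary~\ref{cor:sparse_vdw}.
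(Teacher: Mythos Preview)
Your deletion approach differs from the paper's and contains two genuine gaps.

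First, the claim that cycles with extra overlap are ``negligible at this threshold'' is false for $k\geq4$. Consider $2$-cycles on the minimum number $k+1$ of vertices: every $(k+1)$-AP in $[n]$ contains two $k$-APs sharing $k-1$ elements and hence yields such a $2$-cycle. There are $\Theta(n^2)$ of these, so their expected count in $\hkap([n]_p)$ is $\Theta(n^2p^{k+1})=\Theta(n^{(k-3)/(k-1)})\to\infty$ for $k\geq4$. A smaller vertex count means a \emph{larger} survival probability $p^v$, and here the combinatorial count does not shrink enough to compensate. Thus $Y$ is not $O_{k,g}(1)$ and you cannot delete only boundedly many vertices.

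Second, even with a corrected count $Y=O(n^{(k-3)/(k-1)})=o(np)$, you would still need the robust canonical property after deleting that many vertices. You identify this as ``the main obstacle'' but do not prove it; it does not follow from Theorem~\ref{thm:random_canonical_vdw} as stated, and establishing it essentially requires reopening the proof of the $1$-statement.

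The paper sidesteps both issues by not deleting at all. It proves the quantitative Theorem~\ref{thm:quant_sparse_canonical_vdw}, giving $\Pr([n]_p\text{ not }\cankvdW)\leq e^{-cnp}=\exp(-c'n^{(k-2)/(k-1)})$, and then, since each event $\{\cC\not\subset\hkap([n]_p)\}$ is decreasing, applies Harris' inequality over all minimal short cycles $\cC$ to obtain
\[
\Pr\bigl(\hkap([n]_p)\text{ has girth }\geq g\bigr)\;\geq\;\prod_{\cC}\Pr\bigl(\cC\not\subset\hkap([n]_p)\bigr)\;\geq\;\exp\bigl(-C_1 n^{(k-3)/(k-1)}\bigr).
\]
As $(k-2)/(k-1)>(k-3)/(k-1)$, the failure probability for $\cankvdW$ is eventually smaller than the success probability for large girth, so with positive probability $[n]_p$ itself is $\cankvdW$ with $\hkap([n]_p)$ of girth at least $g$ --- no deletion or robustness needed.
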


 As $(k+1)$-APs in $A$ give cycles of length 2 in $\hkap(A)$, Corollary \ref{cor:sparse_vdw} in particular gives sets that are $\cankvdW$ whilst containing no $(k+1)$-AP, the existence of which was previously proven by Pr\"omel and Rothschild \cite{1987.PR}. Our proof  of Corollary \ref{cor:sparse_vdw} follows a scheme used by R\"odl and Ruci\'nski \cite{1995.RR}.

\subsection{The development of random Ramsey theory}
The study of thresholds for Ramsey properties in discrete random
structures started with the pioneering works of Frankl and R\"odl
\cite{1986.FR} exploring sparse Ramsey graphs, {\L}uczak, Ruci\'nski and Voigt \cite{1992.LRV} who initiated a systematic study  and R\"odl and Ruci\'nski \cite{1995.RR} who determined many of the thresholds, also in arithmetic settings. Since then, the area has become a cornerstone   of modern probabilistic combinatorics, with many exciting developments both in methods and results. Around 10 years ago, there was an important breakthrough which placed Ramsey thresholds in random structures into a more general  framework. Simultaneously and independently, Conlon and Gowers \cite{2016.CG} and Schacht \cite{2016.Schacht} developed powerful \textit{transference principles} giving $1$-statements  for thresholds for a host of different monotone properties (like in Theorem \ref{thm:random_vdw}), solving several longstanding open problems in the process. Shortly after this, Balogh, Morris and Samotij \cite{2015.BMS} and Saxton and Thomason \cite{2015.ST} developed the theory of \textit{hypergraph containers}, giving  lemmas that describe the structure of the family of independent sets in well-distributed hypergraphs. It turns out that this level of abstraction leads to a stunning level of applicability. Among  many other applications, hypergraph containers provide relatively simple proofs to many of the previous theorems \cite{2016.CG,2016.Schacht} establishing thresholds for properties of discrete random structures. In particular, in the context of Ramsey properties, Nenadov and Steger \cite{2016.NS} showed how hypergraph containers can give beautiful proofs for $1$-statements of  random Ramsey theorems such as Theorem \ref{thm:random_vdw}.

These tools (and further beautiful ideas) have opened up avenues for
new directions in random Ramsey theory and the area has flourished in
recent years. In particular, we mention recent breakthroughs in
proving sharp thresholds for random Ramsey properties \cite{2022.FKSS}
and establishing thresholds for asymmetric Ramsey problems in graphs
\cite{2025.CMSW,2020.MNS}. Exploring canonical Ramsey theorems has
also only recently been addressed.  
Clearly, when studying canonical properties, one has to deal with the
fact that one allows an unbounded number of colours, and that adds an
extra layer of difficulty. 
Nevertheless, Kam\v{c}ev and Schacht \cite{2023.KS} succeeded in obtaining a
remarkable result establishing the location of the threshold for the
random graph to have the canonical Ramsey property with respect to
cliques, using the transference principle of Conlon and Gowers
\cite{2016.CG}. Alvarado, Kohayakawa, Morris and Mota also recently
proved a general theorem for canonical Ramsey properties in graphs
when the colourings are restricted to pre-assigned lists
\cite{2023.AKMM} and used this to give a random canonical Ramsey
theorem for even cycles \cite{2024.AKMM}.  These latter results, as
well as our proof of Theorem \ref{thm:random_canonical_vdw}, use
hypergraph containers as a key tool. We refer to the papers
\cite{2023.AKMM,2024.AKMM,2023.KS} for details of the results and in
particular for the definition of canonical Ramsey properties in
graphs, which has extra \textit{lexicographic}-type colour patterns
that do not appear in the $k$-AP setting.

We finish this introduction by remarking that it would be very interesting to develop sparse and/or random analogues to other canonical Ramsey theorems from additive combinatorics such as the canonical Rado theorem \cite{1986.L} of Lefmann or the recently proven canonical version of the polynomial van der Waerden theorem due to Gir\~ao \cite{2020.G} and independently by Fox, Wigderson and Zhao \cite{2020.FWZ}.

\subsection{Organisation}
In Section \ref{sec:prelims} we introduce some notation and discuss auxiliary results and lemmas we need in
the proof of Theorem~\ref{thm:random_canonical_vdw}, which is then proven in Section~\ref{sec:outline}. In Section \ref{sec:cor} we then prove Corollary \ref{cor:sparse_vdw}.

\subsubsection*{Acknowledgements} We are grateful to  the anonymous referee for their careful work.

\section{Preliminaries} \label{sec:prelims}

The binomial random set, denoted $[n]_p$, refers to the set obtained by taking every element of $[n]$
independently with probability $p = p(n)$.  

\subsection{Colourings} For $A\subseteq \NN$ and $r\in \NN$ we say that  a colouring $\chi:A\rightarrow \NN$ is an $r$-colouring if $|\chi(A)|\leq r$, where $\chi(A):=\{\chi(a):a\in A\}$. In such a case we will often identify $\chi(A)$ as $[r]$. For $\alpha>0$, we say a colouring $\chi:A\rightarrow \NN$ is 
\emph{$\alpha$-bounded} if $|\chi^{-1}(i)|\leq \alpha |A|$ for all $i\in \NN$. For two colourings $\chi,\phi:A\rightarrow \NN$, we say $\phi$ is a \textit{merging} of $\chi$ if there exists $\pi \colon \NN \to \NN$ such that $\phi(i) = \pi(\chi(i))$ for every $i \in A$.
 Thus the partition defined by the colouring $\phi$ can be obtained from the partition defined by $\chi$ by merging some colour classes. 

\begin{observation}
\label{obs:merging}
 If $\chi,\,\phi:A\rightarrow \NN$ and $\phi$ is a merging of $\chi$ then any $k$-AP that is rainbow with respect to $\phi$ is also rainbow with respect to $\chi$. 
\end{observation}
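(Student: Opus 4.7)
The plan is to prove the observation by contrapositive: I would show that any two elements of $A$ receiving the same colour under $\chi$ must also receive the same colour under $\phi$. Granting this, if a $k$-AP $a_1,\dots,a_k\subseteq A$ satisfies $\chi(a_i)=\chi(a_j)$ for some $i\neq j$, then $\phi(a_i)=\phi(a_j)$ and so the $k$-AP cannot be rainbow under $\phi$ either, which is precisely the contrapositive of the desired statement.

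The only input needed is the defining property of a merging, namely that there exists $\pi:\NN\to\NN$ with $\phi(i)=\pi(\chi(i))$ for every $i\in A$. Applying $\pi$ to both sides of any equality $\chi(a)=\chi(a')$ yields $\phi(a)=\pi(\chi(a))=\pi(\chi(a'))=\phi(a')$, which is exactly the implication required. Conceptually, merging can only identify colour classes and never split them, so any rainbow pattern present after merging was already present beforehand.

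There is no real obstacle here; the observation is a one-line consequence of unfolding the definition of a merging and a single application of the function $\pi$. The only reason to record it as a standalone statement is that it will later let us freely pass from a given colouring $\chi$ of $[n]_p$ to a suitable merging $\phi$ (for instance one with few colours or satisfying an $\alpha$-boundedness condition) while ensuring that forbidding rainbow $k$-APs is preserved in the passage.
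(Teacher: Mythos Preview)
Your proposal is correct; the paper does not provide a proof of this observation at all, treating it as immediate from the definition of merging. Your contrapositive argument via the defining map $\pi$ is precisely the natural justification, so there is nothing to compare.
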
 

We also need the following simple lemma. 

\begin{lemma}[Colour merging] \label{lemma:merging}
   Let $0<\alpha\leq 1$ and
   $A\subseteq \NN$.  If $\chi:A\rightarrow \NN$ is $\alpha$-bounded,
   then there exist some $r\in \NN$ with $r\leq 4/\alpha$ and an $\alpha$-bounded $r$-colouring $\phi:A\rightarrow [r]$ which is a merging of~$\chi$. 
\end{lemma}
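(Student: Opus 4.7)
The plan is a greedy bin-packing argument. Write $n := |A|$ and list the colour classes of $\chi$ as $C_1, C_2, \ldots$, so that each satisfies $|C_i| \leq \alpha n$ by $\alpha$-boundedness. I would merge classes by packing them into ``bins'' of total capacity $\alpha n$: every bin will eventually form one new colour class of $\phi$.

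Concretely, I would process the classes one by one in an arbitrary order using a \emph{first-fit} rule. For each incoming class $C$, if there is an already-opened bin $B$ whose current total satisfies $|B| + |C| \leq \alpha n$, then place $C$ into $B$; otherwise open a fresh bin containing just $C$. Let $B_1, \dots, B_r$ be the bins produced and define $\phi \colon A \to [r]$ by assigning colour $j$ to every element of $B_j$. Then $\phi$ is a merging of $\chi$ (since each $C_i$ lands entirely in a single bin) and $\phi$ is $\alpha$-bounded (since each bin has total at most $\alpha n$ by construction). So everything except the bound on $r$ is immediate.

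To bound $r$, I claim that at the end of the process at most one bin has total strictly less than $\alpha n / 2$. Suppose for contradiction that two distinct bins $B_i$ and $B_j$ are both small in this sense, with $B_j$ opened no earlier than $B_i$, and let $C$ be the class that caused $B_j$ to be created. Then $|C|$ is at most the final total of $B_j$, hence $< \alpha n/2$, and the total of $B_i$ at that earlier moment is at most its final total, hence also $< \alpha n/2$; therefore $|B_i| + |C| < \alpha n$ at that point, contradicting the first-fit rule that forced a new bin. Hence all but at most one of the bins has total at least $\alpha n/2$, and since the bins are disjoint subsets of $A$, there are at most $2/\alpha$ such bins. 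Using $\alpha \leq 1$, which gives $1 \leq 2/\alpha$, I conclude $r \leq 2/\alpha + 1 \leq 4/\alpha$, as required.

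There is no serious obstacle: the argument is a standard first-fit analysis. The only minor point to take care of is the ``$+1$'' coming from the possibly half-empty bin, which is absorbed precisely because the hypothesis $\alpha \leq 1$ forces $2/\alpha \geq 1$.
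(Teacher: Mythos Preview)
Your proof is correct and follows essentially the same approach as the paper's: a greedy merging of colour classes, followed by the observation that at most one resulting class can have size below $\alpha|A|/2$, yielding the bound $r\leq 2/\alpha+1\leq 4/\alpha$. The only cosmetic difference is that the paper repeatedly merges any two classes of size at most $\alpha|A|/2$, whereas you phrase the same process as first-fit bin packing; the analysis and conclusion are identical.
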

\begin{proof}
  We  apply a greedy merging procedure to merge sparse colour classes induced by $\chi$ and
  obtain a nearly balanced colouring $\phi \colon A \to [r]$ with $r:=\lfloor 4/\alpha\rfloor$.
  We initiate by setting $\phi = \chi$, and, while there exist
  two colours $i,j\in \NN$ with $|\phi^{-1}(i)|, |\phi^{-1}(j)|\leq \alpha |A| / 2$, we pick two of them arbitrarily and merge them, updating $\phi$ so that $\phi(a)=i$ for all $a\in \phi^{-1}(j)$. We repeat this
  procedure until every colour class but one contains at least $\alpha |A|/2$
  elements, so that there will be at most $2/\alpha + 1 \leq r$ colours
  left in $\phi(A)$, each defining colour classes with  less than $\alpha |A|$ elements. Upon relabeling  $\phi(A)$  with $[r]$, we obtain the desired colouring $\phi$ which is a merging of $\chi$. 
\end{proof}

\subsection{Hypergraph containers}
For a set $V$ and $1\leq t\leq |V|$, we let $\cP(V):=\{S:S\subseteq
V\}$ denote the power set of $V$ and $\binom{V}{\leq t}$ denote all
sets in $\cP(V)$ of size at most $t$. For a $k$-uniform hypergraph
$\cH=(V(\cH),E(\cH))$, we let $v(\cH):=|V(\cH)|$ and
$e(\cH):=|E(\cH)|$. For $1\leq \ell\leq k$ and a set $S\subset V(\cH)$
with $|S|=\ell$, let~$\deg(S)$ be the number of edges in $E(\cH)$ that
contain $S$. The maximum $\ell$-degree of~$\cH$, denoted $\Delta_\ell(\cH)$, is the maximum value of $\deg(S)$ over all sets $S\subseteq V(\cH)$ of size $\ell$. Finally $\cI(\cH)$ denotes the collection of independent sets in $\cH$.  

We make use of the hypergraph container method, developed
independently by Balogh, Morris and Samotij~\cite{2015.BMS}, and by
Saxton and Thomason~\cite{2015.ST}. The central insight of this method
is that, under some suitable conditions, the family of independent
sets in a hypergraph can be efficiently covered by a small collection
of \emph{containers}—vertex subsets that induce few hyperedges. Every
independent set is contained in at least one container  and the total
number of containers is substantially smaller than the number of
independent sets. This reduction allows one to reason about containers
rather than independent sets, leading to more efficient union bounds.
We give the result in the following form, which can be derived from
\cite[Theorem 2.2]{2015.BMS} by setting $\cF\subseteq \cP(V(\cH))$ to
be   the   family $\cF=\cF_{\eps'}:=\{W\subseteq V(\cH):e(\cH[W])\geq \eps' e(\cH) \mbox{ and }|W|\geq \eps' v(\cH)\}$.

\begin{theorem}[Hypergraph containers]
  \label{theorem:containers}
  For every  $k\in \NN$ and $c,\varepsilon'>0$,
  there exists $C'>0$ such that the following holds. Suppose $\cH$ is a
  $k$-uniform hypergraph and $p \in (0, 1)$ is such that, for every
  $\ell \in [k]$,
  \[
    \Delta_\ell(\cH) \leq c p^{\ell-1} \frac{e(\cH)}{v(\cH)}.
  \]
  Then there exists a family $\cS \subseteq \binom{V(\cH)}{\leq C' p v(\cH)}$ and
  functions $f \colon \cS \to \cP(V(\cH))$ and $g \colon \cI(\cH) \to \cS$
  such that:
  \begin{enumerate}[label={\rm(\Roman*)}]
    \item \label{cont:small}For every $S \in \cS$,
    we have that $
        |f(S)| < \varepsilon' v(\cH)$ or $
        e(\cH[f(S)]) < \varepsilon' e(\cH).$
    \item  \label{cont: indep}For every $I \in \cI(\cH)$, we have that $
        g(I) \subseteq I$ and $ I \setminus g(I) \subseteq
        f(g(I)).$
     \end{enumerate}
\end{theorem}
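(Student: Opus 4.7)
Following the hint given in the paragraph immediately preceding the statement, the plan is to derive this result as a direct specialization of Theorem~2.2 of~\cite{2015.BMS} to the family
\[
  \cF := \cF_{\eps'} = \{W \subseteq V(\cH) : e(\cH[W]) \geq \eps' e(\cH) \text{ and } |W| \geq \eps' v(\cH)\}.
\]
First, I would verify that $\cF_{\eps'}$ is \emph{increasing} (upward-closed), which is the only structural requirement the BMS theorem imposes on such a family: if $W \in \cF_{\eps'}$ and $W \subseteq W'$, then trivially $|W'| \geq |W| \geq \eps' v(\cH)$ and $e(\cH[W']) \geq e(\cH[W]) \geq \eps' e(\cH)$, so $W' \in \cF_{\eps'}$. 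In particular, the ``denseness'' premise of the BMS statement (that every member of $\cF_{\eps'}$ induces at least $\eps' e(\cH)$ edges) holds tautologically.

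Next, I would check that the codegree hypothesis $\Delta_\ell(\cH) \leq c\, p^{\ell-1} e(\cH)/v(\cH)$ for every $\ell \in [k]$ matches the degree assumption required by BMS, up to absorbing the constant $c$ into the output constant $C'$. The BMS theorem then produces a family $\cS \subseteq \binom{V(\cH)}{\leq C' p v(\cH)}$ of fingerprints together with an assignment $g \colon \cI(\cH) \to \cS$ and a container map $f \colon \cS \to \cP(V(\cH))$ satisfying, for every $I \in \cI(\cH)$, both $g(I) \subseteq I$ and $I \setminus g(I) \subseteq f(g(I))$, and furthermore satisfying $f(S) \notin \cF_{\eps'}$ for every $S \in \cS$. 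The first two conclusions are exactly \ref{cont: indep}.

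Finally, unpacking $f(S) \notin \cF_{\eps'}$ via De~Morgan gives precisely \ref{cont:small}: the negation of the defining conjunction is that $|f(S)| < \eps' v(\cH)$ or $e(\cH[f(S)]) < \eps' e(\cH)$. There is no substantive mathematical obstacle here—all of the content sits inside the BMS container statement, and the derivation is essentially a dictionary translation from the abstract container framework to the clean form used later in the paper. The only genuine bookkeeping is to choose $C'$ large enough, as a function of $c$, $\eps'$ and $k$, to absorb the prefactor $c$ appearing in our degree hypothesis into the constant provided by BMS.
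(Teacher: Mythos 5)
Your derivation is correct and follows exactly the route the paper indicates: specialize BMS Theorem~2.2 to the upward-closed family $\cF_{\eps'}$, observe that the size and denseness hypotheses on $\cF_{\eps'}$ hold by definition, and read off condition~\ref{cont:small} from $f(S)\notin\cF_{\eps'}$ by De~Morgan. The only slip is cosmetic: there is no need to ``absorb'' $c$ into $C'$, since $c$ is already an input parameter of BMS Theorem~2.2 on which the output constant is allowed to depend, matching the quantifier structure here exactly.
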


\subsection{Szemerédi's theorem}
We will also need to use versions of Szemerédi's
classical theorem, which states that sets of positive density in $[n]$ contain arithmetic progressions of arbitrary length.  For $\alpha>0$ and $k\in \NN$, we 
say that a set
$A$ is \emph{$(\alpha,k)$-Szemer\'edi} (or  $\aksz$ for short)  if 
every $B \subseteq A$ with $|B| \geq \alpha |A|$ contains a
  $k$-AP. The following theorem, which transfers Szemer\'edi's theorem to sparse random sets, was proven by Schacht~\cite{2016.Schacht} and independently by Conlon and Gowers~\cite{2016.CG} (with slightly weaker probability bounds); see also \cite[Corollary 4.1]{2015.BMS}. 

\begin{theorem}[Random Szemerédi]
  \label{theorem:sparse_szemeredi}
  For every integer $k>0$ and $\alpha > 0$, there exist constants $C,
  c > 0$ such that for $p > Cn^{-1/(k-1)}$, the
  random set $[n]_p$ is $\aksz$ with probability at
  least $1 - e^{-cnp}$.
\end{theorem}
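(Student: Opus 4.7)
The plan is to apply the hypergraph container method (Theorem \ref{theorem:containers}) to the $k$-AP hypergraph $\cH := \hkap([n])$, use a supersaturated form of Szemerédi's theorem to keep every container significantly smaller than $\alpha n$, and then close with a Chernoff bound over the family of containers.

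First, the hypotheses of Theorem \ref{theorem:containers} need to be checked. We have $v(\cH) = n$ and $e(\cH) = \Theta_k(n^2)$; moreover $\Delta_1(\cH) = \Theta_k(n)$ and $\Delta_\ell(\cH) = O_k(1)$ for $\ell \geq 2$, because any two terms of a $k$-AP determine its common difference (up to at most $\binom{k}{\ell}$ choices of positions). Plugging these into $\Delta_\ell(\cH) \leq c\, p^{\ell-1}\, e(\cH)/v(\cH)$ shows that the binding case is $\ell = k$, which requires $p \gtrsim n^{-1/(k-1)}$; this is granted by hypothesis for $C$ large. Averaging Szemerédi's theorem over subintervals yields the supersaturation statement: for every $\beta > 0$ there exists $\delta = \delta(\beta, k) > 0$ such that every $W \subseteq [n]$ with $|W| \geq \beta n$ contains at least $\delta n^2$ $k$-APs. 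Taking $\beta := \alpha/2$ and choosing $\varepsilon'$ small enough that $\varepsilon' < \alpha/2$ and $\varepsilon'\, e(\cH) < \delta n^2$, Theorem \ref{theorem:containers} produces $\cS$, $f$, $g$ and $C' > 0$ such that every container $T_S := f(S) \cup S$ satisfies $|T_S| \leq (\alpha/2) n + C'pn$.

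Now suppose $[n]_p$ fails to be $\aksz$. Then there exists $B \in \cI(\cH)$ with $B \subseteq [n]_p$ and $|B| \geq \alpha |[n]_p|$; setting $S := g(B)$ we obtain $S \subseteq [n]_p$, $B \subseteq T_S$, and hence $|[n]_p \cap T_S| \geq \alpha |[n]_p|$. So
\[
\Pr\bigl([n]_p \text{ is not } \aksz\bigr) \leq \sum_{S \in \cS} \Pr\bigl[\,S \subseteq [n]_p \text{ and } |[n]_p \cap T_S| \geq \alpha |[n]_p|\,\bigr].
\]
Conditioning on $\{S \subseteq [n]_p\}$ (event of probability $p^{|S|}$) and writing $X := |[n]_p \cap (T_S \setminus S)|$ and $Y := |[n]_p \setminus T_S|$ as independent binomials, the event of interest becomes $(1-\alpha)(|S|+X) \geq \alpha Y$. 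Since $|T_S|/n$ is bounded away from $\alpha$ for $p$ small, a Chernoff--Bernstein estimate applied to the signed Bernoulli sum $(1-\alpha)X - \alpha Y$ yields a conditional probability at most $\exp(-c_0 np)$ whenever $|S| \leq c_1 np$, for suitable $c_0, c_1 > 0$.

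The main technical obstacle is closing the union bound: the family $\cS$ has entropy $\log|\cS| = \Theta(np \log(1/p))$ in our regime, which is not immediately dominated by $\exp(-c_0 np)$. The remedy is to choose $\varepsilon'$ very small so that $|T_S|/n \leq \eta$ with $\eta \ll \alpha$; a sharp Chernoff bound in KL-divergence form then gives conditional probability $\leq \exp(-\alpha\log(\alpha/\eta)\cdot np)$, and in the small-fingerprint regime $|S| \leq c_1 np$ one checks directly that $\sum_{|S|\leq c_1 np} \binom{n}{|S|} p^{|S|} = \exp(o_\eta(np))$, which is absorbed by the Chernoff factor. The complementary large-fingerprint regime is handled by a more refined container-based argument of the type carried out in \cite[Corollary~4.1]{2015.BMS}, exploiting the full independence of $B$ in $\cH$ rather than only the containment $B \subseteq T_S$. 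Collecting both regimes yields $\Pr\bigl([n]_p \text{ is not } \aksz\bigr) \leq \exp(-cnp)$ for some $c = c(\alpha, k) > 0$.
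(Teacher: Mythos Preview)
First, note that the paper does not prove this theorem: it is quoted from the literature (Schacht, Conlon--Gowers, and \cite[Corollary~4.1]{2015.BMS}). Your container-plus-supersaturation-plus-Chernoff strategy is exactly the Balogh--Morris--Samotij approach, so on the level of method there is nothing to compare.

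Where your argument has a real gap is the union-bound closure. You apply Theorem~\ref{theorem:containers} with its parameter equal to the random-set probability~$p$; this produces fingerprints of size up to~$C'pn$ and creates the entropy problem you diagnose. The clean fix is to decouple the two: apply containers once and for all with parameter $q:=n^{-1/(k-1)}$, independently of~$p$. Then every fingerprint has $|S|\leq C'qn$, which for $p\geq Cq$ is at most $(C'/C)\,pn$ and hence $\leq c_1 pn$ for any prescribed $c_1>0$ once $C$ is large; there is no ``large-fingerprint regime'' at all. The union bound
\[
\sum_{s=0}^{C'qn}\binom{n}{s}\,p^{s}\,e^{-c_0 np}
\]
then closes exactly as at the end of the proof of Proposition~\ref{prop:bounded_sparse_vdw} (see~\eqref{eq:8} and the display following it): the summand $(C_2np/s)^s$ is increasing on $s<np/e$, so for $C$ large the maximum sits at $s=C'qn$, and one finishes using $(q/p)\log(p/q)\to0$ as $C=p/q\to\infty$.

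Your alternative KL-Chernoff route can also be salvaged, but not as written. The assertion $\sum_{|S|\leq c_1 np}\binom{n}{|S|}p^{|S|}=\exp(o_\eta(np))$ is false, and in fact shrinking~$\eta$ forces~$\varepsilon'$ smaller in Theorem~\ref{theorem:containers}, which makes~$C'$ \emph{larger}, not smaller. The correct replacement is the trivial bound $\sum_S p^{|S|}\leq (1+p)^n\leq e^{np}$, which is beaten by the KL-Chernoff factor $\exp\bigl(-\alpha\log(\alpha/\eta)\,np\bigr)$ as soon as $\eta$ is small enough that $\alpha\log(\alpha/\eta)>1$; no appeal to a separate regime or to \cite{2015.BMS} is then needed.
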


 Finally we need the following
supersaturated version of the Szemerédi's Theorem, first proven by Varnavides \cite{1959.V} (see also \cite[Lemma 4.2]{2015.BMS}). 

\begin{theorem}[Supersaturated Szemerédi]
  \label{theorem:szemeredi}
  For every integer $k>0$ and $\delta > 0$, there exists $\epsilon >
  0$ such that, for sufficiently large $n$, any $A \subseteq [n]$ with
  $|A| \geq \delta n$ contains at least $\epsilon n^2$ $k$-APs.
\end{theorem}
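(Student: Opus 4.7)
The plan is to deduce this from (ordinary) Szemerédi's theorem by the classical averaging argument of Varnavides, viewing the supersaturation statement as a pigeonhole consequence of Szemerédi applied inside short ``windows''.

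First, I would apply Szemerédi's theorem with density parameter $\delta/2$ and length $k$ to obtain an integer $N = N(k, \delta/2)$ such that every subset of size at least $(\delta/2)N$ of any arithmetic progression of length $N$ in $\NN$ contains a $k$-AP. Let $\cP$ denote the family of all $N$-APs contained in $[n]$; a direct count gives $|\cP| = \Theta_N(n^2)$ for $n$ sufficiently large, and moreover every $x \in [n]$ lies in $\Omega_N(n)$ members of~$\cP$. Now I would show by an averaging argument that a positive proportion of $N$-APs in $\cP$ are \emph{dense}, meaning $|A \cap P| \geq (\delta/2) N$. Indeed, double counting pairs $(a, P)$ with $a \in A \cap P$ gives
\[
   \sum_{P \in \cP} |A \cap P| \;=\; \sum_{a \in A} |\{P \in \cP \colon a \in P\}| \;\geq\; \delta n \cdot \Omega_N(n) \;=\; \Omega_N(n^2),
\]
while $|A \cap P| \leq N$ always, so at least $\Omega_N(n^2)$ members of $\cP$ must be dense. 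By the choice of $N$, each dense $P \in \cP$ contains at least one $k$-AP of $A$.

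To finish, I would bound the overcounting: any given $k$-AP $Q \subseteq [n]$ with common difference $d$ is contained in at most $O_N(1)$ many $N$-APs, since an $N$-AP $P$ containing $Q$ must have common difference of the form $d/m$ for some positive integer $m \leq (N-1)/(k-1)$ dividing $d$, and for each such $m$ there are at most $N$ choices for the position of $Q$ inside $P$. Dividing the $\Omega_N(n^2)$ dense pairs by this $O_N(1)$ multiplicity yields at least $\epsilon n^2$ distinct $k$-APs in $A$ for an appropriate $\epsilon = \epsilon(k, \delta) > 0$, as required.

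The only mildly non-routine step is the multiplicity bound in the final paragraph, which needs one to verify that the divisibility constraint on the common difference plus the constraint on the starting offset gives a bound depending only on $N$ (and hence only on $k$ and~$\delta$). Everything else reduces to invoking Szemerédi's theorem as a black box and a straightforward double count, so there is no substantial obstacle.
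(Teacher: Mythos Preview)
The paper does not give its own proof of this statement; it is quoted as a classical result of Varnavides (with a pointer also to the container-paper exposition~\cite[Lemma~4.2]{2015.BMS}). Your outline is precisely the standard Varnavides averaging argument and is correct in spirit, and your multiplicity bound in the final paragraph is fine.

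There is one genuine looseness in the averaging step. From $\sum_{P\in\cP}|A\cap P|=\Omega_{N}(n^2)$ together with $|A\cap P|\le N$ you cannot directly conclude that $\Omega_N(n^2)$ of the $P$ are dense; you must also subtract the contribution of the non-dense $P$, and for that the implicit constants have to match. With the pointwise lower bound ``every $x\in[n]$ lies in $\Omega_N(n)$ progressions'' (which for boundary~$x$ is only about $n/(N-1)$) against $|\cP|\approx n^2/(2(N-1))$, the subtraction yields nothing once $N\ge4$. The usual remedy is either to apply Szemer\'edi with a smaller density parameter (e.g.\ $\delta/4$ in place of $\delta/2$), or equivalently to restrict to common differences $d\le \delta n/(CN)$ so that boundary effects cost only a small fraction of~$|A|$; after either adjustment the averaging goes through cleanly. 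This is routine and does not affect the structure of your argument.
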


\section{Proof of Theorem \ref{thm:random_canonical_vdw}}
\label{sec:outline}

As noted in the introduction, to establish Theorem \ref{thm:random_canonical_vdw}, it suffices to prove the \ref{can random 1}-statement, which we restate here with quantitative bounds on the error probability.   This slightly strengthened version is needed   for
our proof of Corollary \ref{cor:sparse_vdw}.

\begin{theorem}[Random canonical van der Waerden]
  \label{thm:quant_sparse_canonical_vdw}
  For every integer $3 \leq k\in \NN$, there exist $C, c > 0$ such that
  for $p > Cn^{-1/(k-1)}$  the random set $[n]_p$
  is $\cankvdW$ with probability at least $1 - e^{-cnp}$.
\end{theorem}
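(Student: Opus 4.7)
I would prove \cref{thm:quant_sparse_canonical_vdw} by adapting the container-based strategy of Nenadov--Steger for random van der Waerden to the canonical setting. The approach combines sparse Szemer\'edi (to force a bound on colour class sizes), the merging lemma (to reduce to a bounded number of effective colours), and hypergraph containers (to encode the remaining structure for a union bound).

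Fix small constants $0 < \eps' \ll \delta \ll \alpha$ depending only on $k$. The $k$-AP hypergraph $\cH := \hkap$ on $[n]$ satisfies $v(\cH) = n$, $e(\cH) = \Theta(n^2)$, $\Delta_1(\cH) = O(n)$, and $\Delta_\ell(\cH) = O_k(1)$ for $\ell \geq 2$, so for $p \geq Cn^{-1/(k-1)}$ with $C$ large the degree hypothesis of \cref{theorem:containers} holds. I would choose $\eps'$ small enough that \cref{theorem:szemeredi} ensures $|f(S)| < \delta n$ for every fingerprint $S \in \cS$, and then use \cref{theorem:sparse_szemeredi} together with a Chernoff estimate to condition on the favourable event, of probability at least $1 - e^{-\Omega(np)}$, that $[n]_p$ is $(\alpha, k)$-Szemer\'edi and has size at most $2pn$.

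Now assume for contradiction that a bad colouring $\chi$ of $[n]_p$ exists. Every $\chi$-colour class is $k$-AP-free, so the Szemer\'edi property forces $\chi$ to be $\alpha$-bounded; \cref{lemma:merging} then yields a merging $\phi\colon [n]_p \to [r]$ with $r \leq 4/\alpha$, and by \cref{obs:merging} $\phi$ itself has no rainbow $k$-AP. Moreover, since any monochromatic or rainbow $k$-AP of $\chi|_{U_j}$ would pull back to one of $\chi$, the restriction $\chi|_{U_j}$ to each merged block $U_j = \phi^{-1}(j)$ is itself a bad colouring of $U_j$. Each $\chi$-colour class $V$ is independent in $\cH$, so has a fingerprint $g(V) \in \cS$ with $g(V) \subseteq V \subseteq [n]_p$ and $V \subseteq g(V) \cup f(g(V))$. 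The colouring $\chi$ is thus encoded by the merged partition $\phi$ together with the fingerprint data of its $\chi$-colour classes, and for each encoding the probability that $[n]_p$ realises it factors as $p^{|A|}(1-p)^{n - |B|}$, with $A$ the union of the fingerprints (sitting inside $[n]_p$) and $B$ the union of their containers.

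The chief difficulty lies in controlling this union bound in spite of the potentially unbounded number of $\chi$-colour classes inside a given merged block $U_j$, which threatens to blow up the naive enumeration of fingerprint tuples. I expect to overcome this by leveraging both the size bound $|U_j| \leq \alpha|[n]_p|$ and the total fingerprint mass constraint $\sum_V |g(V)| \leq |[n]_p|$, so that the entire enumeration collapses to roughly $|\cS|^{O_\alpha(1)}$ essentially independent choices, one per merged block. A standard Nenadov--Steger-style calculation then compares this count against the $(1-p)^{\Omega(n)}$ factor coming from elements of $[n]$ outside the container cover, yielding the claimed bound $e^{-cnp}$.
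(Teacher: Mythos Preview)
Your opening moves---using sparse Szemer\'edi to force $\alpha$-boundedness and then merging to an $r$-colouring $\phi$ with $r=O(1/\alpha)$ and no rainbow $k$-AP---match the paper exactly. The divergence, and the gap, is in what you feed to the container theorem.

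You apply containers to $\cH=\hkap([n])$ and fingerprint the individual $\chi$-colour classes. But independence in $\hkap$ encodes only the ``no monochromatic $k$-AP'' constraint; the ``no rainbow $k$-AP'' constraint never enters your container step. Concretely, your union bound needs two things you cannot guarantee. First, the number of $\chi$-colour classes is unbounded, so the enumeration over tuples of fingerprints is not $|\cS|^{O_\alpha(1)}$; your claim that it ``collapses to one per merged block'' is exactly the point that requires proof, and nothing in your setup forces a single fingerprint in $\cS(\hkap)$ to control a whole merged block $U_j$ (which is \emph{not} independent in $\hkap$). Second, and more fatally, even granting an efficient enumeration, the factor $(1-p)^{n-|B|}$ with $B=\bigcup_V f(g(V))$ can be trivial: each container has size at most $\delta n$, but once there are $\gtrsim 1/\delta$ distinct containers their union may cover all of $[n]$, so there need be no $\Omega(n)$ elements ``outside the container cover''. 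The fingerprint-mass bound $\sum_V|g(V)|\le|[n]_p|$ controls $|A|$, not $|B|$.

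The paper fixes this by changing the hypergraph: it defines the \emph{rainbow hypergraph} $\cR=\cR(n,k,r)$ on vertex set $[r]\times[n]$ whose edges are rainbow $k$-APs. Then the entire merged colouring $\phi$ corresponds to a \emph{single} independent set $I=\{(\phi(z),z):z\in[n]_p\}\subseteq V(\cR)$, so one fingerprint suffices and the Nenadov--Steger enumeration goes through. The remaining work is a structural lemma about the containers of $\cR$ (the paper's Lemma~\ref{lemma:few_sols_few_colors}): a container either projects to a sparse subset of $[n]$, or there is a large set $B\subseteq[n]$ on which it uses only $O_k(1)$ colours---and then $\alpha$-boundedness of $\phi$ forces $|[n]_p\cap B|$ to be atypically small. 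This is where the rainbow constraint actually bites, and it is the idea your proposal is missing.
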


Erdős and Graham~\cite{1980.EG}
proved their canonical version of the van der Waerden theorem making
use of Szemerédi's theorem~\cite{szemeredi75}, while an elementary proof
was given by Prömel and Rödl~\cite{promel86:_gallai_witts}.  We
follow the Erdős--Graham approach: 
the proof of Theorem \ref{thm:quant_sparse_canonical_vdw} starts
out with a basic dichotomy over a given colouring of $[n]_p$. If such
a colouring has a colour with positive density, we are able to apply
Szemerédi's theorem for random sets (Theorem
\ref{theorem:sparse_szemeredi}) to find a monochromatic $k$-AP.  If,
on the other hand, all colours are sparse, it turns out that we can
find a rainbow $k$-AP. For $\alpha>0$ and $k\in \NN$, we say a set $A\subseteq \NN$ is $\akrb$ if every $\alpha$-bounded colouring of $A$ results in a rainbow $k$-AP.

\begin{proposition}[Rainbow $k$-APs in bounded colourings]
  \label{prop:bounded_sparse_vdw}
  For every $3 \leq k \in \NN$, there exist  $C,\,c,\,\alpha>0$ such that  for sufficiently large $n$ and  $p > Cn^{-1/(k-1)}$,  the random set $[n]_p$ is $\akrb$ with probability at least $1-e^{-cnp}$. 
\end{proposition}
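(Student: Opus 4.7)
The plan is to combine the hypergraph container method (Theorem \ref{theorem:containers}) with Szemer\'edi-type supersaturation. By Lemma \ref{lemma:merging} and Observation \ref{obs:merging}, it suffices to consider $\alpha$-bounded $r$-colourings $\chi \colon [n]_p \to [r]$ for $r := \lceil 4/\alpha \rceil$; we choose $\alpha$ small enough that $r \geq k$. Introduce the $k$-uniform hypergraph $\cH$ on $V(\cH) := [n] \times [r]$ whose edges are the \emph{coloured rainbow $k$-APs} $\{(v_1, c_1), \ldots, (v_k, c_k)\}$, where $\{v_1, \ldots, v_k\}$ is a $k$-AP in $[n]$ and $c_1, \ldots, c_k$ are distinct elements of $[r]$. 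An $r$-colouring $\chi$ is rainbow-free if and only if $I_\chi := \{(v, \chi(v)): v \in [n]_p\}$ is an independent set of $\cH$. Standard counts give $v(\cH) = rn$, $e(\cH) = \Theta_{r,k}(n^2)$, $\Delta_1(\cH) = \Theta_{r,k}(n)$ and $\Delta_\ell(\cH) = O_{r,k}(1)$ for $\ell \geq 2$, so the degree hypothesis of Theorem \ref{theorem:containers} holds for $p > Cn^{-1/(k-1)}$ with $C$ large. Applying that theorem with a small parameter $\eps' > 0$ yields a family $\cS$ and maps $f, g$, such that every rainbow-free $\alpha$-bounded $\chi$ satisfies $I_\chi \subseteq f(g(I_\chi))$.

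For each container $T := f(S)$, write $C_T(v) := \{c : (v, c) \in T\}$, $V(T) := \{v : C_T(v) \neq \emptyset\}$, and $V^*(T) := \{v : |C_T(v)| \geq k\}$. The key structural step is that in the ``few edges'' alternative $e(\cH[T]) < \eps' e(\cH)$ one must have $|V^*(T)| < \gamma n$ for some $\gamma = \gamma(\eps')$ with $\gamma \to 0$ as $\eps' \to 0$. Indeed, were $|V^*(T)| \geq \gamma n$, Theorem \ref{theorem:szemeredi} would yield $\Omega_\gamma(n^2)$ $k$-APs inside $V^*(T)$, and each such $k$-AP $(v_1, \ldots, v_k)$ would, by Hall's marriage theorem applied to the lists $C_T(v_1), \ldots, C_T(v_k)$ (each of size at least $k$), admit at least $k!$ rainbow extensions inside $T$, contradicting the sparsity of $\cH[T]$ for $\eps'$ sufficiently small.

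The final step is the probabilistic union bound over $\cS$. For each $S$, the existence of a bad colouring $\chi$ with $g(I_\chi) = S$ forces $V(S) \subseteq [n]_p \subseteq V(T)$, an event of probability $p^{|V(S)|}(1-p)^{n - |V(T)|}$; here $|V(S)| = |S|$, since the fingerprint of an actual colouring is an injective subset of $[n] \times [r]$. In the ``few vertices'' case $|T| < \eps' rn$, choosing $\eps' < 1/(2r)$ yields coverage decay at most $e^{-pn/2}$. In the ``few edges'' case, the $\alpha$-boundedness of $\chi$, combined with $|V^*(T)| < \gamma n$ and a pigeonhole over the possible colour-option subsets $C \subseteq [r]$ of size at most $k - 1$ for vertices in $V(T) \setminus V^*(T)$, forces $|[n]_p \cap V^*(T)| \geq (1 - O_r(\alpha)) |[n]_p|$, which by a Chernoff-type bound has probability $e^{-\Omega_{\alpha,\gamma}(pn)}$, with exponent growing without bound as $\gamma \to 0$. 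The main technical obstacle lies in balancing this per-container bound against the weighted fingerprint sum $\sum_{S \in \cS} p^{|S|} \leq (1 + rp)^n = e^{O_r(pn)}$: the parameters $\eps', \gamma, \alpha$ must be chosen in the right order so that the Chernoff decay exponent strictly dominates the fingerprint contribution, yielding the desired overall bound of $e^{-cnp}$.
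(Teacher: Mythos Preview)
Your overall architecture matches the paper's---merge to $r$ colours, encode rainbow-free colourings as independent sets in the rainbow hypergraph, apply containers, and exploit Szemer\'edi-type supersaturation to analyse the ``few edges'' containers---but there is a genuine gap in your structural step. The pigeonhole you propose over the at most $\binom{r}{\leq k-1}$ colour-option subsets $C$ only yields
\[
\bigl|[n]_p \cap (V(T)\setminus V^*(T))\bigr| \;\le\; \sum_{|C|\le k-1}\bigl|[n]_p\cap A'_C\bigr| \;\le\; \binom{r}{\leq k-1}(k-1)\,\alpha\,|[n]_p|,
\]
and since the merging lemma forces $r = \lceil 4/\alpha\rceil$, the right-hand side is of order $\alpha^{-(k-2)}|[n]_p|\gg|[n]_p|$ for every $k\geq 3$. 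No ordering of the parameters $\eps',\gamma,\alpha$ rescues this, because $r$ and $\alpha$ cannot be decoupled: to merge an $\alpha$-bounded colouring down to $r$ colours while keeping it $\alpha$-bounded one needs $r\ge 4/\alpha$. So the conclusion $|[n]_p\cap V^*(T)|\ge(1-O_r(\alpha))|[n]_p|$ simply does not follow from your pigeonhole.

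The paper's remedy (Lemma~\ref{lemma:few_sols_few_colors}) is to use the ``few edges'' hypothesis a second time, beyond just bounding $|V^*(T)|$. Via the rainbow supersaturation Lemma~\ref{lemma:rainbow_van_der_waerden}, one shows that a container $T$ with few rainbow edges must contain a set $B$ of at least $n/4$ integers whose colour lists $C_T(b)$ all lie inside a \emph{single} set $\Omega$ of size $M=M(k)$, crucially independent of $r$: if too many integers in $V(T)\setminus V^*(T)$ had a colour available outside the ``popular'' colours, one could define a bounded colouring of a dense set using only rare colours, and rainbow supersaturation would then produce too many rainbow $k$-APs inside $T$. With this $B$ and $\Omega$ in hand, $\alpha$-boundedness gives the clean bound $|[n]_p\cap B|\le M\alpha\,|[n]_p|$, which is genuinely small once $\alpha\le 1/(32M)$; only afterwards is $r=\lceil 4/\alpha\rceil$ fixed. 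Your proposal is missing precisely this second supersaturation step. A smaller but related issue: the crude fingerprint bound $\sum_S p^{|S|}\le(1+rp)^n=e^{O_r(pn)}$ cannot beat a per-container decay $e^{-c_1pn}$ with $c_1$ fixed before $r$; the paper instead exploits that fingerprints have size at most $n'=O(n^{1-1/(k-1)})\ll pn$, so the truncated weighted sum is $e^{o(pn)}$ once $p\ge Cn^{-1/(k-1)}$ with $C$ large.
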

\begin{proof}[Proof of Theorem \ref{thm:quant_sparse_canonical_vdw} assuming Proposition \ref{prop:bounded_sparse_vdw}]
 Fixing $3\leq k\in \NN$ and applying
  Proposition~\ref{prop:bounded_sparse_vdw} we obtain
  constants $C_1$, $c_1$ and $\alpha$. Then, applying
  Theorem~\ref{theorem:sparse_szemeredi} with $k$ and $\alpha$ we get
  constants $C_2$ and~$c_2$. Therefore taking $C=\max\{C_1,C_2\}$ and $c=\min\{c_1,c_2\}/2$, we have that if $p>Cn^{-1/(k-1)}$ then with probability at least $1-e^{-cnp}$, the random set $[n]_p$ is simultaneously $\aksz$ and $\akrb$. The conclusion now follows as for any colouring $\chi:A\rightarrow \NN$, if there is some colour $i\in \NN$ with $|\chi^{-1}(i)|\geq \alpha |A|$, then there is a monochromatic $k$-AP in colour $i$ using that $A$ is $\aksz$. If, on the other hand, the colouring $\chi$ is $\alpha$-bounded, then there is a rainbow $k$-AP on account of $A$ being $\akrb$.   
\end{proof}

Thus it remains to prove Proposition \ref{prop:bounded_sparse_vdw}.
 In order to do this, our proof follows the scheme of Nenadov and Steger~\cite{2016.NS}, who used the hypergraph container theorem (Theorem \ref{theorem:containers}) to prove the \ref{random 1}-statement of Theorem \ref{thm:random_vdw}. In Section
\ref{sec:rainbow_hypergraph} we therefore define 
 a suitable \emph{rainbow hypergraph} that encodes 
rainbow $k$-APs and apply the container theorem to this hypergraph.  Next,
in Section \ref{sec:supersaturation}, we apply a supersaturation result for 
 rainbow $k$-APs   to prove a key lemma (Lemma \ref{lemma:few_sols_few_colors}), which provides  structural information about our containers. We regard this key lemma, as well as  its exploitation, as  the most interesting features  of our proof. Finally, in
 Section~\ref{sec:pieces} we conclude our desired result by appealing to a union bound over all containers. 

\subsection{Rainbow hypergraph}
\label{sec:rainbow_hypergraph}
We define the \emph{rainbow hypergraph}
$\cR = \cR(n, k, r)$ to be the $k$-uniform hypergraph with vertex set consisting of $r$ copies of
$[n]$, one for every possible colour $\omega\in[r]$, and edge set formed by all possible rainbow $k$-APs.
Formally, 
$V(\cR) =  [r]\times [n]$ and 
\[
  E(\cR) = \left\{\left\{(\omega_1,a_1), \dots, (\omega_k,a_k)\right\} \in \binom{V(\cR)}k  \colon (a_1, \dots, a_k) \text { forms a }
  k\text{-AP and }
\omega_i \neq \omega_j \, \forall i \neq j
\right\}.
\]
(The interested reader is referred to~\cite{lin22:_integ,li22:_integ}
for a different application of this hypergraph.) 

\begin{lemma}
  \label{lemma:regularity_rainbow}
  For integers $r \geq k \geq 3$, there exists $c > 0$ such
  that, for $1 \leq \ell \leq k$, the rainbow hypergraph $\cR =
  \cR(n, k, r)$ satisfies 
  \[
    \Delta_\ell(\cR) \leq c n^{-(\ell-1)/(k-1)} \frac{e(\cR)}{v(\cR)}.
  \]
\end{lemma}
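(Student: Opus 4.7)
The plan is to compute $v(\cR)$, $e(\cR)$, and each $\Delta_\ell(\cR)$ separately, and then to verify the stated inequality directly. Since $V(\cR) = [r]\times [n]$, we have $v(\cR) = rn$. For the edge count, observe that every $k$-AP in $[n]$ gives rise to exactly $r(r-1)\cdots(r-k+1)$ edges of $\cR$, one for each injective colouring of its $k$ elements by colours in $[r]$. Since the number of $k$-APs in $[n]$ is $\Theta(n^2)$ and $k,r$ are fixed, $e(\cR) = \Theta(n^2)$, whence $e(\cR)/v(\cR) = \Theta(n)$.

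Next I would estimate $\Delta_\ell(\cR)$ for $1\leq \ell\leq k$. Fix $S = \{(\omega_1,a_1),\ldots,(\omega_\ell,a_\ell)\} \subseteq V(\cR)$. If either the $\omega_i$ or the $a_i$ fail to be pairwise distinct, then $S$ is contained in no edge of $\cR$, so $\deg(S) = 0$. Otherwise, an edge of $\cR$ containing $S$ is specified by (i) extending $\{a_1,\ldots,a_\ell\}$ to a $k$-AP $P\subseteq [n]$, and (ii) assigning distinct colours from $[r]\setminus\{\omega_1,\ldots,\omega_\ell\}$ to the $k-\ell$ remaining elements of $P$. Part (ii) contributes at most $r^{k-\ell}$, a constant depending only on $k$ and $r$, so the real content is in counting the extensions in (i).

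For this count, I would invoke the classical bound: for $\ell = 1$, a single element $a\in[n]$ lies in $O_k(n)$ $k$-APs in $[n]$ (choose a common difference $d$ with $|d|\leq n/(k-1)$ and one of $k$ positions for $a$); and for $\ell\geq 2$, any two distinct integers $a<b$ lie in at most $k(k-1)$ $k$-APs, because a $k$-AP through both forces $b-a = (j-i)d$ for some $0\leq i<j\leq k-1$, giving at most $k-1$ choices of $d$ and for each at most $k$ positions. Combining with the bounded factor from (ii), this yields
\[
  \Delta_1(\cR) = O_{k,r}(n), \qquad \Delta_\ell(\cR) = O_{k,r}(1) \quad \text{for } 2\leq \ell\leq k.
\]

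It remains to match these bounds against the target $c\,n^{-(\ell-1)/(k-1)}\cdot e(\cR)/v(\cR) = \Theta\bigl(n^{(k-\ell)/(k-1)}\bigr)$. For $\ell=1$ the target is of order $n$, matching $\Delta_1(\cR)$ after choosing $c$ large enough; for $2\leq \ell\leq k$ the target is at least of order $n^{1/(k-1)}$, which dominates the constant upper bound on $\Delta_\ell(\cR)$ for all $n\geq 1$ once $c$ is chosen sufficiently large. The argument is largely bookkeeping; the only point that requires a brief justification is the uniform constant in the "two points determine at most $k(k-1)$ $k$-APs" bound, but this is a standard elementary observation and no genuine obstacle is anticipated.
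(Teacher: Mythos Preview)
Your proof is correct and follows essentially the same approach as the paper: lower-bound $e(\cR)$ by counting $k$-APs in $[n]$, then bound $\Delta_1(\cR) = O(n)$ and $\Delta_\ell(\cR) = O(1)$ for $\ell \geq 2$ using that one (respectively two) fixed integers lie in $O(n)$ (respectively $O(1)$) $k$-APs. One cosmetic slip: for $\ell = k$ the target $n^{(k-\ell)/(k-1)}$ is $\Theta(1)$, not at least $n^{1/(k-1)}$ as you wrote, but since $\Delta_k(\cR) = O(1)$ as well the conclusion still goes through once $c$ is large enough.
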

\begin{proof}
  Let us first lower bound $e(\cR)$. For $a, d \in
  [n]$, the $k$-AP $\{a, a+d, \dots, a+(k-1)d \}$ certainly lies in
  $[n]$ if $a \leq n/k$ and $d \leq n/k$. This gives a lower bound of
  at least $(n/k)^2$ different $k$-APs contained in $[n]$. As $r \geq k$, every $k$-AP in $[n]$ gives rise
  to at least one rainbow $k$-AP, which gives
$    e(\cR) \geq \left(n/k\right)^2.$

  A given integer $a$ in $[n]$ belongs to at most $kn$ different $k$-APs
  contained in $[n]$, accounting for its position in a $k$-AP and the
  value of the common difference. Therefore  a given vertex $(\omega,a)\in V(\cR)$ is contained in at most
  \[
    \Delta_1(\cR) \leq k n r^{k-1} \leq k^3 r^{k} \frac{n^2}{k^2 nr} \leq
    k^3 r^{k}\frac{e(\cR )}{v(\cR )}
  \]
  edges of $\cR$. 
Given two different integers in $[n]$ there are at most $k^2$
  different $k$-APs containing them, accounting for their positions in the $k$-AP. Therefore, for $2 \leq \ell \leq k$
  it holds that
  \[
    \Delta_\ell(\cR) \leq k^2r^{k-2} \leq k^4 r^{k-1} \frac{e(\cR)}{n v(\cR)} \leq 
    k^4 r^{k-1} n^{-(\ell-1)/(k-1)}\frac{e(\cR)}{v(\cR)}.
  \]
  Setting $c: = \max(k^3r^{k}, k^4r^{k-1}) = k^3r^{k}$ gives the result.
\end{proof}

With Lemma \ref{lemma:regularity_rainbow} in hand we can apply Theorem
\ref{theorem:containers} to obtain the following.

\begin{lemma}[Containers for rainbow  hypergraphs]
  \label{theo:rainbow_containers}
  For  integers $r \geq k \geq 3$ and $0<\eps<1/2$, there
  exists a constant $C> 0$ such that the following holds. For all $n
  \in \bbn$, taking $\cR=\cR(n,k,r)$ and $n':=\lfloor Cn^{1-1/(k-1)}\rfloor$, we have that there exists a collection of \emph{fingerprints} $\cS\subseteq \binom{V(\cR)}{\leq n'}$, a collection of \emph{containers} $\cW\subseteq \cP(V(\cR))$ and a function $f \colon \cS\to \cW$ such that:
  \begin{enumerate}[label={\rm(\roman*)}]
       \item \label{rb cont: almost indep} Every $W \in \cW$ satisfies $e(\cR[W])
    < \varepsilon n^2$.
  \item \label{rb cont: indep}For every independent set $I\in \cI(\cR)$, there is some $S\in \cS$ such that $ S\subseteq I$ and $I\setminus
    S \subseteq f(S)$.
  \end{enumerate}
\end{lemma}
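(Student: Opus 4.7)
The plan is to apply the hypergraph container theorem (Theorem \ref{theorem:containers}) directly to the rainbow hypergraph $\cR=\cR(n,k,r)$, using the degree bounds supplied by Lemma \ref{lemma:regularity_rainbow}. First I would take $p:=n^{-1/(k-1)}$, so that the hypothesis $\Delta_\ell(\cH)\leq c p^{\ell-1}e(\cH)/v(\cH)$ required by Theorem \ref{theorem:containers} matches exactly the bound $\Delta_\ell(\cR)\leq c n^{-(\ell-1)/(k-1)}e(\cR)/v(\cR)$ provided by Lemma \ref{lemma:regularity_rainbow}, where the constant $c$ depends only on $k$ and $r$. Next I would choose a parameter $\eps'>0$ small enough in terms of $\eps$, $k$ and $r$ so that both alternatives in conclusion \ref{cont:small} of the container theorem translate into the edge bound $e(\cR[W])<\eps n^2$ demanded by \ref{rb cont: almost indep}; the precise choice is explained in the last paragraph. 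Invoking Theorem \ref{theorem:containers} with these parameters then produces a family $\cS\subseteq\binom{V(\cR)}{\leq C' p v(\cR)}$ together with functions $f\colon\cS\to\cP(V(\cR))$ and $g\colon\cI(\cR)\to\cS$ satisfying \ref{cont:small} and \ref{cont: indep}.

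Once $\cS$, $f$ and $g$ are in hand, most of the remainder is bookkeeping. Since $p v(\cR)=n^{-1/(k-1)}\cdot rn=r n^{1-1/(k-1)}$, setting $C:=C'r$ guarantees $|S|\leq C' r n^{1-1/(k-1)}\leq n'$ for every $S\in\cS$, so $\cS\subseteq\binom{V(\cR)}{\leq n'}$ as required. Taking $\cW:=\{f(S):S\in\cS\}$ and viewing $f$ as a map $\cS\to\cW$, condition \ref{rb cont: indep} is immediate from \ref{cont: indep}: given $I\in\cI(\cR)$, the fingerprint $S:=g(I)\in\cS$ satisfies $S\subseteq I$ and $I\setminus S\subseteq f(S)$, which is exactly what we need.

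The only nontrivial point is verifying \ref{rb cont: almost indep} in both alternatives of \ref{cont:small}. In the first case $e(\cR[f(S)])<\eps' e(\cR)$, the crude bound $e(\cR)\leq r^k n^2$, which follows because each of the at most $n^2$ integer $k$-APs in $[n]$ lifts to at most $r^k$ rainbow realisations, together with $\eps'\leq\eps/r^k$ gives $e(\cR[f(S)])<\eps n^2$ immediately. In the second case $|f(S)|<\eps' v(\cR)=\eps' rn$, the naive estimate $e(\cR[f(S)])\leq\binom{|f(S)|}{k}$ is of order $n^k$ and thus far too weak; \emph{this is the main obstacle}. I would instead exploit the rigidity of $k$-APs: since any $k$-AP in $[n]$ is determined by its first two terms, an $m$-element subset of $[n]$ contains at most $\binom{m}{2}$ integer $k$-APs, each of which admits at most $r^k$ rainbow realisations inside $f(S)$. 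This yields $e(\cR[f(S)])\leq r^k\binom{|f(S)|}{2}\leq r^{k+2}(\eps')^2 n^2/2$, and imposing also $\eps'\leq\sqrt{2\eps/r^{k+2}}$ closes this case. Combining the two constraints fixes $\eps'$ as a function of $\eps,k,r$ alone, and hence so is the final constant $C$.
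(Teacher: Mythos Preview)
Your proposal is correct and follows essentially the same route as the paper: apply Theorem~\ref{theorem:containers} to~$\cR$ with $p=n^{-1/(k-1)}$ via the degree bounds of Lemma~\ref{lemma:regularity_rainbow}, set $C=rC'$, define $\cW=\{f(S):S\in\cS\}$, and handle the two alternatives of~\ref{cont:small} separately. The paper chooses a single $\eps'=\eps/(k^2r^k)$ and, in the small-container case, counts pairs of coloured vertices directly (each pair of integers lies in at most $k^2$ $k$-APs, with $r^{k-2}$ choices for the remaining colours), whereas you project to $[n]$ first and then lift; the resulting bounds and the overall argument are the same.
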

\begin{proof}
    We apply Theorem \ref{theorem:containers} with   $0<\eps':=\eps/(k^2 r^k)$ and $c$ the constant output by Lemma \ref{lemma:regularity_rainbow} with input $r$ and $k$. Thus Theorem \ref{theorem:containers} outputs a constant $C'>0$ and we fix $C:=rC'$. 
     Furthermore, fixing  $p:= n^{-1/(k-1)}$  and $\cH=\cR$, 
Lemma \ref{lemma:regularity_rainbow} and Theorem
\ref{theorem:containers} imply the existence of a set $\cS\subseteq
\binom{V(\cR)}{\leq n'}$ and functions $f:\cS\rightarrow \cP(V(\cR))$
and $g: \cI(\cR)\rightarrow \cS$ as in the conclusion of Theorem
\ref{theorem:containers}. We  define $\cW:=\{f(S):S\in \cS\}\subseteq \cP(V(\cR))$.

Let us check conditions \ref{rb cont: almost indep} and \ref{rb cont: indep} for $\cW$ and  $f:\cS\rightarrow \cW$.  For \ref{rb cont: almost indep}, we have that for each $W\in \cW$ condition \ref{cont:small} of Lemma \ref{theorem:containers} gives that $|W|< \eps'v(\cR)$  or $e(\cR[W])<\eps'e(\cR)$. Therefore,
\[
e(\cR[W]) < \max\{(\eps' v(\cR))^2 k^2 r^{k-2}, \eps'e(\cR)\} \leq \max\{(\eps'rn)^2k^2r^{k-2}, \eps'n^2r^k\}\leq \eps n^2,
\]
using that a pair of integers is contained in at most $k^2$ $k$-APs and that $\eps':=\eps/(k^2 r^k)$. For condition~\ref{rb cont: indep}, if $I\in \cI(\cR)$ is an independent set in $\cR$, we take $S=g(I)\subseteq I$. Then Theorem \ref{theorem:containers} guarantees that $|S|\leq C'n^{-1/(k-1)}v(\cR)\leq Cn^{1-1/(k-1)}$ and $I\setminus S\subseteq f(S)$.
\end{proof}

\subsection{Supersaturation for rainbow \texorpdfstring{$k$}{k}-APs}
\label{sec:supersaturation}
Supersaturation  is the phenomenon that above an extremal threshold for some substructure of interest, one in fact has many copies of that substructure, as in  Theorem \ref{theorem:szemeredi} for $k$-APs.   One can also show supersaturation for coloured copies in Ramsey settings. For example, by considering the largest colour class one can easily infer from Theorem \ref{theorem:szemeredi} that for any fixed  $r$ and $n$ sufficiently large, an $r$-colouring of any dense subset of $[n]$ induces many \textit{monochromatic } $k$-APs. Our next lemma gives such a result for rainbow $k$-APs in \textit{bounded} colourings.

\begin{lemma}[Rainbow $k$-AP supersaturation]
  \label{lemma:rainbow_van_der_waerden}
  For every $3\leq k\in \NN$ and $\delta > 0$, there exist  $\epsilon,\, \beta>0$ such that the following holds for
  large enough $n$.
  Every $\beta$-bounded colouring of a subset $A \subseteq [n]$ with
  $|A| \geq \delta n$ contains at least $\varepsilon n^2$ rainbow
  $k$-APs.
\end{lemma}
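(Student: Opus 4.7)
The plan is to deduce the lemma from the supersaturated Szemerédi theorem (Theorem \ref{theorem:szemeredi}) by a simple deletion argument, showing that a sufficiently bounded colouring cannot spoil more than a tiny fraction of the $k$-APs guaranteed by that theorem. First I would apply Theorem \ref{theorem:szemeredi} with parameters $k$ and $\delta$ to obtain some $\varepsilon_0 = \varepsilon_0(k, \delta) > 0$ such that, for sufficiently large $n$, the set $A$ contains at least $\varepsilon_0 n^2$ $k$-APs.

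Next I would upper-bound the number of \emph{non-rainbow} $k$-APs in $A$, meaning those containing at least one pair of equally-coloured elements. Writing $m_i = |\chi^{-1}(i)|$ for the sizes of the colour classes, the $\beta$-bounded hypothesis gives $m_i \leq \beta |A|$ for all $i$. Hence the number of monochromatic pairs in $A$ satisfies
\[
\sum_i \binom{m_i}{2} \;\leq\; \tfrac{1}{2}\left(\max_i m_i\right) \sum_i m_i \;\leq\; \tfrac{1}{2}\beta |A|^2 \;\leq\; \tfrac{\beta}{2} n^2.
\]
Since any pair of integers belongs to at most $\binom{k}{2}$ $k$-APs (one for each choice of the two positions occupied by the pair), a union bound shows that the number of non-rainbow $k$-APs in $A$ is at most $\binom{k}{2}\beta n^2/2 \leq k^2 \beta n^2/2$.

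Finally I would set $\beta := \varepsilon_0 / k^3$ (or any comparable function of $\varepsilon_0$ and $k$) and $\varepsilon := \varepsilon_0 / 2$. With these choices the number of rainbow $k$-APs in $A$ is at least
\[
\varepsilon_0 n^2 \;-\; \tfrac{k^2 \beta}{2} n^2 \;\geq\; \varepsilon n^2,
\]
as required.

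I do not anticipate a real obstacle: the statement is a standard Varnavides-plus-deletion estimate. The only subtle point is that in bounding monochromatic pairs one must exploit \emph{both} the uniform bound $m_i \leq \beta |A|$ on each colour class and the identity $\sum_i m_i = |A| \leq n$. Using only a per-class bound would introduce a factor equal to the number of colours, which under a $\beta$-bounded colouring can be as large as $|A|$ and would ruin the estimate; the convexity step $\sum_i m_i^2 \leq (\max_i m_i)\sum_i m_i$ is what avoids this loss.
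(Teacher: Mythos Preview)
Your proof is correct and follows essentially the same approach as the paper's: apply supersaturated Szemer\'edi (Theorem~\ref{theorem:szemeredi}) to get $\varepsilon_0 n^2$ many $k$-APs in $A$, then delete the at most $O(\beta n^2)$ non-rainbow ones by bounding monochromatic pairs and using that any pair lies in $O(k^2)$ $k$-APs. The only cosmetic differences are that the paper counts ordered pairs $(a_1,a_2)$ directly rather than via $\sum_i\binom{m_i}{2}$, and uses the looser constant $k^2$ in place of your $\binom{k}{2}$, leading to $\beta=\varepsilon_0/(2k^2)$ rather than your $\varepsilon_0/k^3$.
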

\begin{proof}
  Let $\epsilon_1$ be the constant provided by Theorem
  \ref{theorem:szemeredi} for sets of density $\delta$ and  fix $\beta :=
  \epsilon_1/2k^2$  and $\eps:=\epsilon_1/2$. Taking $A\subseteq \NN$ with $|A|\geq \delta n$, and $\phi:A\rightarrow [n]$ to be some $\beta$-bounded colouring of $A$, we upper bound the number of $k$-APs in $A$ with two elements of the same colour
  according to $\varphi$. There are at most $|A|\leq n$ ways to choose
  some $a_1\in A$ and at most $\beta |A|\leq \beta n$ ways to choose
  $a_2\in A$ with $\phi(a_1)=\phi(a_2)$. Given two elements $a_1, a_2
  \in A$, the number of $k$-APs containing both $a_1$ and~$a_2$ is at most~$k^2$.
  Hence, there are at most  $\beta k^2n^2$  $k$-APs using a repeated colour. By Theorem \ref{theorem:szemeredi}, there are at least~$\epsilon_1
  n^2$ $k$-APs in $A$ and so the total number of rainbow $k$-APs
  is at least
 $    \epsilon_1 n^2 - \beta k^2 n^2 \geq \eps n^2.$
\end{proof}

In Ramsey settings with a bounded number of colours, as considered by Nenadov and Steger \cite{2016.NS}, supersaturation results immediately give useful information about containers. Indeed, as a container induces $o(e(\cH))$ edges of the hypergraph (which correspond to monochromatic $k$-APs in that setting), one can conclude that the container `misses many integers', that is, 
the projection of the container onto $[n]$ must be sparse. In our
setting, as Lemma \ref{lemma:rainbow_van_der_waerden} only applies to
\textit{bounded} colourings, it is not immediate how to deduce
structural information about containers. Nonetheless, we are able to
do so in the form of Lemma~\ref{lemma:few_sols_few_colors} below.
This key lemma implies that any container either misses many integers, or there is some large subset of integers that induces few colours in the container. 

To make this precise, we introduce some notation.
For $\cR=\cR(n,k,r)$ and any vertex subset $W\subseteq V(\cR)$ and integer $x\in [n]$, 
we let 
\[W_x:=\{\omega\in [r]:(\omega,x)\in W\}.\] 
In words, $W_x$ is the set of colours in which $x$ appears in $W$. We also take $\pi_{[n]}(W):=
\left\{x \in [n] \colon W_x \neq \varnothing \right\}$ to be the projection of $W$ onto the integers $[n]$. 

\begin{lemma}[Structure of containers]
  \label{lemma:few_sols_few_colors}
  Given $3\leq k\in \NN$, there exists $M\in \NN$ and $\eps>0$ such that  for every
  $r,\,n\in \NN$ the following holds for $\cR=\cR(n,k,r)$.  We have that if $W\subseteq V(\cR)$ is such that $e(\cR[W]) <
  \varepsilon n^2$ and  $A:=\pi_{[n]}(W)$ has size $|A| \geq 3n/4$, then there is some subset of integers $B \subseteq A$ and a subset of colours $\Omega\subseteq [r]$ such that 
  \begin{equation}
\label{eq:conditions_container}
|B| \geq n/4, \quad |\Omega|\leq M \quad  \text{and} \quad  W_b\subseteq \Omega \mbox{ for all } b\in B.
  \end{equation}
\end{lemma}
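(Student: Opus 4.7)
The plan is to construct $\Omega$ and $B$ iteratively, using Hall's theorem for $b$-matchings together with the rainbow supersaturation result (Lemma~\ref{lemma:rainbow_van_der_waerden}) as the engine driving the iteration. First, apply Lemma~\ref{lemma:rainbow_van_der_waerden} with $\delta := 1/2$ to fix constants $\beta, \eps > 0$; I use this $\eps$ in the statement of the lemma and set $M := \lceil 1/\beta^2 \rceil$.

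Starting from $\Omega_0 = \emptyset$, at each step $t$ define $B_t := \{x \in A : U_x \subseteq \Omega_t\}$. If $|B_t| \geq n/4$, halt and output $(B, \Omega) := (B_t, \Omega_t)$. Otherwise $|A\setminus B_t| \geq |A|- n/4 \geq n/2$, and the key question is whether there exists a $\beta$-bounded assignment $\chi \colon A \setminus B_t \to [r]$ with $\chi(x) \in U_x$ for every $x$ and $|\chi^{-1}(\omega)| \leq \beta |A \setminus B_t|$ for every $\omega \in [r]$.

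If such $\chi$ exists, then Lemma~\ref{lemma:rainbow_van_der_waerden} applied to $A\setminus B_t$ (whose density in $[n]$ is at least $\delta = 1/2$) produces at least $\eps n^2$ rainbow $k$-APs under $\chi$. Each such $k$-AP $(a_1, \dots, a_k)$ lifts to a distinct edge $\{(\chi(a_1), a_1), \dots, (\chi(a_k), a_k)\}$ of $\cR[U]$, so $e(\cR[U]) \geq \eps n^2$, contradicting the hypothesis. Otherwise, no such $\chi$ exists, and Hall's theorem for $b$-matchings (equivalently, the max-flow/min-cut computation on the natural bipartite capacitated network with sink capacities $\beta|A\setminus B_t|$) yields a set $\Omega_{\mathrm{new}} \subseteq [r]$ with
\[
 |\{x \in A \setminus B_t : U_x \subseteq \Omega_{\mathrm{new}}\}| > \beta |A \setminus B_t|\cdot|\Omega_{\mathrm{new}}|;
\]
since the left-hand side is trivially at most $|A\setminus B_t|$, we obtain $|\Omega_{\mathrm{new}}| < 1/\beta$. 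Setting $\Omega_{t+1} := \Omega_t \cup \Omega_{\mathrm{new}}$, the new good set satisfies $|B_{t+1}| \geq |B_t| + \beta|A\setminus B_t| \geq |B_t| + \beta n/2$. Hence the iteration halts after at most $\lceil 1/(2\beta)\rceil$ steps, and at termination $|\Omega| \leq \lceil 1/(2\beta)\rceil \cdot (1/\beta) \leq M$, giving the conclusion.

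The main obstacle is the conceptual passage from ``few rainbow $k$-APs in $\cR[U]$'' to ``few colours suffice to cover the fibres $U_x$ for a positive fraction of $x$'': the supersaturation of Lemma~\ref{lemma:rainbow_van_der_waerden} only applies to \emph{bounded} colourings, so one must either produce a bounded selection $\chi(x) \in U_x$ (which immediately contradicts the hypothesis via the edge-lift described above) or certify, in a usable structural form, why no such bounded selection can exist. The Hall/flow dichotomy is what makes this rigorous: every obstruction to a bounded selection takes exactly the shape of a small ``absorbing'' colour set $\Omega_{\mathrm{new}}$ that captures the fibres of many integers, which is precisely the structure the lemma asserts. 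Once this dichotomy is in hand, the iteration and quantitative bookkeeping balancing growth of $B$ against the colour budget $|\Omega|$ are straightforward.
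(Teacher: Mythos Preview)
Your proof is correct and takes a genuinely different route from the paper's.

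The paper proceeds in one shot: first it discards the integers~$x$ with $|U_x|\ge k$ (these form a set~$D$ in which every $k$-AP lifts to a rainbow edge, so supersaturated Szemer\'edi forces $|D|\le n/4$); on the remaining set~$A'$ it defines $\Omega$ to be the set of \emph{heavy} colours, i.e.\ those appearing in at least $\beta n/4$ of the fibres $U_x$ ($x\in A'$), and bounds $|\Omega|$ by a double count using $|U_x|<k$. Finally it shows $|A'\setminus B|<n/4$ by picking for each $x\in A'\setminus B$ a non-heavy colour $\psi(x)\in U_x\setminus\Omega$, which is automatically $\beta$-bounded and hence contradicts rainbow supersaturation.

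Your argument replaces the ``remove large fibres, then take heavy colours'' step with a clean Hall/flow dichotomy: either a $\beta$-bounded selection $\chi(x)\in U_x$ exists on $A\setminus B_t$ (which lifts to $\ge\eps n^2$ edges in $\cR[U]$, a contradiction), or the Hall defect produces a small absorbing colour set $\Omega_{\rm new}$ capturing at least $\beta|A\setminus B_t|$ new integers; iterating yields $B$ and $\Omega$ with the required bounds. This is slightly more abstract but also more modular: you never need to single out integers with many colours, and you invoke only Lemma~\ref{lemma:rainbow_van_der_waerden} directly rather than also calling supersaturated Szemer\'edi separately. The paper's approach, on the other hand, is entirely elementary---no matching theorem is needed, and $\Omega$ is defined explicitly in one line. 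Quantitatively the two give comparable constants ($M$ of order $1/\beta^2$ versus $k/\beta$).
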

Note that the constant $M$ depends only on $k$ and that the result holds for any number of colours $r$.  
\begin{proof}[Proof of Lemma~\ref{lemma:few_sols_few_colors}]
 Fix $\delta:=1/4$, let $\eps_1>0$ be the constant provided by Theorem
 \ref{theorem:szemeredi} with input $\delta$, let $\eps_2>0$ and
 $\beta>0$ be the constants provided by Lemma
 \ref{lemma:rainbow_van_der_waerden} with input $\delta$ and fix
 $M:=\lceil 4k/\beta \rceil$ and  $\eps:=\min\{\eps_1,\eps_2\}$.

 Now let~$W$ be as in the statement of the lemma.  We start out by showing that most integers $x\in A$ induce few colours. Indeed, consider the set 
  \[
    D := \left\{ x \in A \colon |W_x| \geq k \right\}.
  \]
  Since $|W_x| \geq k$ for every $x \in D$, a $k$-AP using integers in $D$
  induces at least one rainbow $k$-AP and therefore
  an edge in $\cR[W]$. Hence, the number of $k$-APs in $D$ is less than  $e(\cR[W]) <
    \varepsilon n^2$ 
   and as $\eps\leq \eps_1$,   Theorem \ref{theorem:szemeredi} implies 
  that $|D| \leq n/4$. We thus have 
  that
  $
    A' := A \setminus D = \left\{ x \in A \colon |W_x| < k \right\}
  $
  satisfies $|A'| \geq n/2$.

  We now define the set of colours $\Omega$ and $B\subseteq A'$
  as 
  \[
    \Omega := \left\{ \omega \in [r] \colon
      |W\cap(\{\omega\}\times A')|\geq\beta n/4
    \right\} \quad \mbox{and} \quad B:=\{b\in A':W_b\subseteq \Omega\}.
  \]
 The third part of \eqref{eq:conditions_container} is thus satisfied by definition and it only remains to bound the sizes of $\Omega$ and $B$.   
  Firstly, as $|W_x|<k$ for all $x\in A'$ and $|A'|\leq n$, by double counting we have that 
  \[
   |\Omega| \beta n /4 \leq |W\cap (\Omega \times A')|  <n k,
  \]
 and so $|\Omega| < 4 k/\beta\leq M$ as required.  Now consider the set $A'':=A'\setminus B$ 
  and define a colouring $\psi \colon A'' \to [r]$ such that $\psi(x) \not \in \Omega$ and
  $\psi(x) \in W_x$ for all $x \in A''$, noting that this is possible due to the definition of~$B$.  Also note that every colour will appear at most $\beta
  n/4$ times as~$\psi(x)$ ($x\in A''$) due to the definition of~$\Omega$.
  Suppose that $|A''| \geq n/4$. In that case, $\psi$ is a $\beta$-bounded
  colouring of $A''$ and Lemma $\ref{lemma:rainbow_van_der_waerden}$ ensures the
  existence of $\varepsilon_2 n^2>\eps n^2$ rainbow $k$-APs, contradicting that $e(\cR[W]) <\eps n^2$.
  Therefore, we 
  conclude that $|A''| < n/4$ and $B$ has size $|B|= |A'|-|A''|\geq n/4$, which finishes the proof. 
\end{proof}

\subsection{Putting everything together}
\label{sec:pieces}
We are now ready to prove Proposition \ref{prop:bounded_sparse_vdw} and thus complete the proof of Theorems \ref{thm:quant_sparse_canonical_vdw} and \ref{thm:random_canonical_vdw}. 
\begin{proof}[Proof of Proposition \ref{prop:bounded_sparse_vdw}]
  Let $M\in \NN$ and $\eps>0$ be the constants provided by Lemma \ref{lemma:few_sols_few_colors} 
  and 
  set $\alpha := 1/(32M)$ and $r: = \lceil 4/\alpha \rceil$.   Let
  $C_1>0$ be output by Lemma  \ref{theo:rainbow_containers} with
  inputs $r$, $k$ and~$\eps$. We will take $C>0$ sufficiently large and $c>0$ sufficiently small to satisfy any constraints we encounter in the proof. 
  We also take $n$ large and
   fix $n':=\lceil{C_1n^{1-1/(k-1)}}\rceil$, and consider the rainbow hypergraph $\cR=\cR(n,k,r)$, the set of fingerprints  $\cS:=\binom{V(\cR)}{\leq n'}$, 
  the containers $\cW\subseteq \cP(V(\cR))$  and the function $f:\cS\rightarrow \cW$ obtained by applying Lemma~\ref{theo:rainbow_containers} to~$\cR$.

Now let $\cZ\subseteq \cP([n])$ be the family of all the sets of
integers that are \textit{not}~$\akrb$. To each set  $Z\in \cZ$, we
associate an independent set $I(Z)\in \cI(\cR)$ as follows. By
definition of the  set $Z\subseteq [n]$ not being $\akrb$,  there is
some $\alpha$-bounded colouring $\chi:Z\rightarrow \NN$ which avoids
rainbow $k$-APs. Take  $\phi:Z\rightarrow [r]$ to be an
$\alpha$-bounded colouring which is a merging of $\chi$ as in Lemma
\ref{lemma:merging}, and note that~$\phi$ also induces no rainbow
$k$-APs by Observation~\ref{obs:merging}.  Fix one such~$\phi$ for
each $Z\in\cZ$.
Finally take $I=I(Z)=\{(\phi(z),z):z\in Z\}\subseteq V(
\cR)$ to be the natural mapping of $Z$ coloured by $\phi$ to $V(\cR)$
and note that $\pi_{[n]}(I(Z))=Z$ and that~$I$ is  independent
in~$\cR$.   

For a fingerprint $S\in \cS$, we say a set $Z\subseteq [n]$ is \textit{compatible} with $S$ if $Z\in \cZ$, $S\subseteq I(Z)$  and $I(Z)\setminus S\subseteq f(S)$.
By our container lemma, Lemma \ref{theo:rainbow_containers}, we have that for every $Z\in \cZ$, there is some $S\in \cS$ such that $Z$ is compatible with $S$. Therefore by a union bound, 
  \begin{equation}
    \label{eq:bound_prob_containers}
      \Pr([n]_p \text{ is not } \akrb) =\Pr([n]_p\in \cZ) 
      \leq \sum_{ S \in \cS} \Pr\big([n]_p \text{ compatible with } S\big).
  \end{equation}
Now for a fixed $S\in \cS$, we bound the probability that $[n]_p$ is
compatible with $S$. Let $Y=Y(S):=\pi_{[n]}(S)$, let $\overline
Y=\overline Y(S):=[n]\setminus Y$ and let $A=A(S):=\pi_{[n]}(W)$, where
  $W:= f(S)$ is the container associated with~$S$. If $[n]_p$ is compatible with $S$, then in particular  we have that $Y\subseteq \pi_{[n]}(I([n]_p))= [n]_p$ and $[n]_p\cap \overline Y\subseteq \pi_{[n]}(I([n]_p)\setminus S)\subseteq A$.  Moreover note that the events $Y\subseteq [n]_p$ and $[n]_p\cap \overline{Y}\subseteq A$  are independent.  
  Therefore if $|A| \leq 3n/4$, then 
  \begin{equation}
   \label{eq:bound_small_container}
   \Pr\big([n]_p \text{ compatible with } S \big) \leq \Pr(Y
	\subseteq [n]_p) \Pr([n]_p \cap \overline Y \subseteq A) 
    \leq p^{|Y|} (1-p)^{n-|Y\cup A|} 
     \leq p^{|Y|} e^{-np/8},
         \end{equation}
  using  that  $|Y \cup A| \leq 3n/4+n'\leq  7n/8$ for large enough $n$.

  On the other hand,  if $|A| \geq 3n/4$, then as $e(\cR[W])<\eps n^2$ by Lemma \ref{theo:rainbow_containers} \ref{rb cont: almost indep},  we can apply Lemma
  \ref{lemma:few_sols_few_colors} to obtain a subset $B \subseteq A$ and a set of colours $\Omega\subseteq [r]$ satisfying
  \eqref{eq:conditions_container}. Taking $B':=B\cap \overline{Y}$,   if $Z\subseteq [n]$ is
  compatible with $S$,  we have that 
  \begin{equation}
    \label{eq:5}
    |Z \cap B'|= |(I(Z)\setminus S)\cap ([r]\times B)|\leq |I(Z)\cap (\Omega \times[n])|,
  \end{equation}
  using that $I(Z)\setminus S\subseteq W$ and the fact that  $W_b\subseteq \Omega$ for all $b\in B$. Therefore, taking $\varphi:Z\rightarrow [r]$ to be  the $\alpha$-bounded colouring
  used to generate $I(Z)$, we get that 
  \begin{equation} \label{eq:Z cap B bound}
    |Z \cap B'|\leq  \sum_{\omega\in \Omega} |I(Z)\cap (\{\omega\}\times [n])|  = \sum_{\omega \in \Omega}\big|\varphi^{-1}(
    \omega
    ) \cap Z\big| 
    \leq \sum_{\omega \in \Omega} \alpha |Z| \leq M \alpha |Z|,
  \end{equation}
  because $\varphi$ is $\alpha$-bounded and $|\Omega|\leq M$.  Thus if
  $|Z|\leq 2 pn$ then $|Z\cap B'| \leq np/16$ because of the
  definition of~$\alpha$.  Hence
  \begin{align}
    \Pr\big(\big|[n]_p\big|\leq2pn\text{ and }[n]_p\text{ is compatible with
    }S\big)
    &\leq\Pr\big(Y\subset[n]_p\text{ and }\big|[n]_p\cap B'\big|\leq
      np/16)\nonumber\\
    &=\Pr\big(Y\subset[n]_p)\Pr(\big|[n]_p\cap B'\big|\leq np/16).
    \label{eq:1}
  \end{align}
  Take~$C$ large enough with respect to~$C_1$ so that
  $|Y|\leq n'\leq np/2$.  Then
  \begin{align}
    \Pr\big(\big|[n]_p\big|>2pn\text{ and }[n]_p\text{ is compatible with
    }S\big)
    &\leq\Pr\big(Y\subset[n]_p\text{ and
      }\big|[n]_p\cap\overline{Y}\big|>3np/2)\nonumber\\ 
    &=\Pr\big(Y\subset[n]_p)\Pr(\big|[n]_p\cap\overline Y\big|>3np/2).
    \label{eq:2}
  \end{align}
  Now note that $|B'|\geq |B|-|Y|\geq n/8$ for~$n$ sufficiently large,
  and hence $\mathbb{E}(|[n]_p\cap B'|)\geq pn/8$.  Also,
  $\mathbb{E}(|[n]_p\cap\overline Y|)\leq np$.  Combining~\eqref{eq:1}
  and~\eqref{eq:2} and applying Chernoff's inequality (see, for example, Appendix~A
  in~\cite{2016.AS}), we have
  \begin{align}
      \Pr\big([n]_p \text{ compatible with } S\big) 
      & \leq \Pr\big(Y \subseteq [n]_p\big) 
      \Big(\Pr\big(\big|[n]_p \cap B'\big| \leq np/16\big) + \Pr\big(\big|[n]_p\cap \overline Y\big|
      >3np/2\big)\Big) \nonumber \\
      &\leq p^{|Y|} e^{-c_1 np},
      \label{eq:bound_large_container}
  \end{align}
  for some constant $c_1>0$, which we may suppose
  satisfies~$c_1\leq1/8$.  Using either
  \eqref{eq:bound_small_container} or \eqref{eq:bound_large_container}
  depending on the size of $A=A(S)$, we can bound the sum in
  \eqref{eq:bound_prob_containers} to obtain
  \[
      \Pr([n]_p \text{ is not } \akrb) \leq \sum_{S \in \cS}
      p^{|Y(S)|} e^{-c_1np}.
  \]
  Recall that every $S\in \cS$ has size at most $n'$  and so $|Y(S)|\leq n'$ also.
  Moreover, for any set $Y'\subset [n]$ of size $0\leq y\leq
  n'$, 
  there are at most $(2^r)^y$  choices of $S\in \cS$ with $Y(S)=Y'$. Therefore, splitting
  according to the size of $Y(S)=\pi_{[n]}(S)$, we have
  \begin{equation}  \label{eq:8}
        \Pr([n]_p \text{ is not } \akrb)  \leq \sum_{y =
        0}^{n'} \binom{n}{y} 2^{ry} p^y e^{-c_1np}
         \leq 2\sum_{y = 1}^{n'} \left(\frac{C_2 n p}{y}\right)^y
      e^{-c_1np},
  \end{equation}
  for some large enough $C_2 = C_2(k) > 0$, where the factor of 2 is to account for the $y=0$ term.
  The function $h(x)=(t/x)^x$ is increasing for $x < t/e$. Therefore,
  provided that $p \geq  C n^{-1/(k-1)}$ for a large enough constant $C = C(k)$, 
  the inner term in the last sum is maximised when $y$
  is maximum. Fixing $q:=n^{-1/(k-1)}$, this implies that the right-hand side of \eqref{eq:8} is at most
  \begin{equation}
       2n'\left(\frac{C_2 p}{C_1q}\right)^{2C_1 qn} e^{-c_1 pn} \leq n \,
    \exp{\left(-pn\left(c_1-2C_1\frac{q}{p}\log
          {\left(\frac{C_2p}{C_1q}\right)}\right)\right)}
    \leq  e^{-cpn}
  \end{equation}      
  for all large enough~$n$, as long as $c>0$ is a sufficiently small
  constant and $C>0$ is a sufficiently large constant, because
  $\lim_{x\to 0}{x\log{1/x}}=0$.
\end{proof}

\section{Proof of Corollary \ref{cor:sparse_vdw}} \label{sec:cor}

We prove Corollary \ref{cor:sparse_vdw} 
by taking $[n]_p$ with an appropriate probability $p=p(n)$ and checking that with
positive probability (albeit tending to zero with $n$) it satisfies both desired
properties. Recall  that a cycle~$\cC$ of length $2\leq\ell\in \NN$
in a hypergraph is a subgraph with distinct vertices
$v_1,\ldots,v_\ell\in V(\cC)$ and an ordering of
$E(\cC)$ as $e_1,\ldots,e_\ell$ such that $v_i\in e_i\cap e_{i+1}$ for
$i=1,\ldots,\ell=|E(\cC)|$ (with $e_{\ell+1}=e_1$).  We call the vertices
$v_1,\dots,v_\ell\in V(\cC)$ the \textit{linking} vertices of the
cycle~$\cC$ and let $V_L(\cC):=\{v_1,\ldots,v_\ell\}$.
  A cycle is \emph{minimal} if it contains no subset of
vertices and edges forming a cycle of smaller length. Minimal cycles satisfy the
following.
\begin{lemma}
  \label{lemma:minimal_cycles}
  For $\ell \geq 3$, a minimal cycle of $\cC$ of length $\ell$ in a $k$-uniform
  hypergraph has  $v(\cC)=(k-1)\ell$.
\end{lemma}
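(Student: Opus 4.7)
The plan is to establish two structural properties of a minimal cycle $\cC$ of length $\ell \geq 3$ and then count. Specifically, I would show: (i) every pair of consecutive edges $e_i, e_{i+1}$ satisfies $e_i \cap e_{i+1} = \{v_i\}$; and (ii) every pair of non-consecutive edges $e_i, e_j$ is disjoint. Granting (i) and (ii), each linking vertex $v_i$ lies in exactly two edges of $\cC$ and each non-linking vertex in exactly one. Double-counting via $\sum_{i=1}^{\ell}|e_i|$ then gives $\ell k = 2\ell + (v(\cC) - \ell)$, and hence $v(\cC) = (k-1)\ell$.

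For (i), if some $w \neq v_i$ lies in $e_i \cap e_{i+1}$, then the two-edge subgraph $\{e_i, e_{i+1}\}$ with distinct linking vertices $\{v_i, w\}$ is a cycle of length $2$ inside $\cC$, contradicting minimality since $\ell \geq 3$.

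For (ii), the case $\ell = 3$ is vacuous (all pairs of edges are consecutive in a triangle), so suppose $\ell \geq 4$ and that some vertex $u$ lies in two non-consecutive edges. Among all such violating triples $(u, e_i, e_j)$, I would pick one minimising the length $m$ of the shorter of the two arcs connecting $e_i$ to $e_j$ along $\cC$; by symmetry, write this arc as $e_i, e_{i+1}, \dots, e_j$, where $m = j - i + 1$ satisfies $3 \leq m \leq \lfloor(\ell+2)/2\rfloor < \ell$. Then these $m$ edges together with the sequence $v_i, v_{i+1}, \dots, v_{j-1}, u$ form an $m$-cycle inside $\cC$---contradicting minimality---provided the listed $m$ vertices are pairwise distinct. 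If instead $u = v_s$ for some $i \leq s \leq j - 1$, then $v_s$ simultaneously lies in $e_i \cap e_s \cap e_{s+1} \cap e_j$, and a short case analysis on the position of $s$ produces either a strictly shorter non-consecutive arc (via one of $(e_i, e_s)$ or $(e_{s+1}, e_j)$), contradicting our minimal choice of $m$, or a direct $2$-cycle as in (i).

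The only slightly delicate point is the descent in step (ii): organising it by arc length ensures finite termination, and the handful of boundary cases ($s = i$ or $s = j-1$, or $m = 3$) collapse back to step (i). Everything else in the proof is either definitional or routine vertex counting.
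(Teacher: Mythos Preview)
Your proof is correct and follows the same overall strategy as the paper: establish that every linking vertex lies in exactly two edges and every non-linking vertex in exactly one, then double count $\sum_i|e_i|=k\ell$ to get $v(\cC)=(k-1)\ell$. The difference is purely in how the structural claims are verified. You argue directly in the hypergraph, proving (i) via an immediate $2$-cycle and (ii) via a descent on arc length; the paper instead passes to the bipartite incidence graph $G_I(\cC)$, observes that $\cC$ corresponds to a $2\ell$-cycle $C$ there, and notes that a linking vertex of degree ${\geq}3$ gives a chord of $C$ while a non-linking vertex of degree ${\geq}2$ gives a subdivided chord, in either case producing a shorter even cycle in $G_I$ and hence a shorter hypergraph cycle. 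The incidence-graph viewpoint packages the case analysis more uniformly (no separate descent or boundary cases), while your direct argument avoids introducing an auxiliary object; both are short and the final counting step is identical.
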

\begin{proof}
Given a hypergraph $\cH$, let the bipartite
\textit{incidence graph}~$G_I(\cH)$ for~$\cH$ have vertex set
$V(G_I(\cH)):=V(\cH)\cup E(\cH)$, with a pair $(v,e)\in
V(\cH)\times E(\cH)$ being an edge of $G_I(\cH)$ if and only if $v\in
e$. Note that  cycles of length $\ell\geq 2$ in a hypergraph $\cH$
with an identified linking vertex set~$V_L$ are in one to one
correspondence with cycles of length $2\ell$ in $G_I(\cH)$.
Take~$\cC$ to be a minimal cycle of length~$\ell$ with linking vertex set
$V_L=V_L(\cC)$, and let~$C$ be the corresponding length $2\ell$ cycle in $G_I:=G_I(\cC)$. Then:
  \begin{itemize}
    \item Each $v\in V_L$ has degree 2 in $G_I$. Indeed if this were not the case then in $G_I[V_L\cup E(\cC)]$ the edge incident to $v$ and not in the cycle $C$ will be a chord in  $C$, giving  smaller even cycles $C',\,C''$   which correspond to a smaller cycles $\cC',\,\cC''\subseteq \cC$ contradicting the minimality of $\cC$.
    \item Each $u\in V(\cC)\setminus V_L$ has degree 1 in
      $G_I$. Indeed, if this is not the case, then in $G':=G_I[V_L\cup
      \{u\}\cup E(\cC)] $ the vertex $u$ and two of its incident edges
      form a subdivided chord of $C$ giving two cycles in $G'$ which
      contain $u$, at least one of which, as we assume $\ell\geq3$, has length less than~$2\ell$ and so contradicts the minimality of $\cC$.
  \end{itemize}
Therefore counting the edges from either side of $G_I$ we have that
$k\ell=2\ell+|V(\cC)\setminus V_L|$, which in turn implies that
$k\ell=\ell+|V(\cC)|$, whence $|V(\cC)|=(k-1)\ell$,
as required.  
\end{proof}

\begin{proof}[Proof of Corollary \ref{cor:sparse_vdw}]
  Fix~$k\geq3$ and let~$C$ and~$c$ be the corresponding positive
  constants given by Theorem~\ref{thm:quant_sparse_canonical_vdw}.
  Let $p = Cn^{-1/(k-1)}$ and let $\cH := \hkap([n]_p)$ be the random
  $k$-uniform hypergraph whose hyperedges are the $k$-APs in~$[n]_p$.
  Let~$g\geq3$ be given.  We prove that~$\cH$ is as required with
  positive probability as long as~$n$ is large enough.

  Let us first upper bound the probability that $\cH$ contains a given minimal
  cycle $\cC$ of length $2\leq \ell< g$. If $\ell = 2$, since $\cC$ spans at least $k+1$
  vertices, we see that
  \begin{equation}
    \label{eq:lower_bound_2_cycle}
    \Pr(\cC \not\subset \cH) \geq 1-p^{k+1}.
  \end{equation}
  If $\ell \geq 3$, the cycle $\cC$ spans  $(k-1)\ell$ vertices on account
  of Lemma \ref{lemma:minimal_cycles}, in which case
  \begin{equation}
    \label{eq:lower_bound_l_cycle}
    \Pr(\cC \not\subset \cH)=1-p^{(k-1)\ell}.
  \end{equation}

  Let $\Theta_{\ell}$ be the family of all minimal cycles of length $\ell$ in
  $\hkap([n])$, and $\Theta = \bigcup_{2 \leq \ell < g} \Theta_{\ell}$ the family
  of all minimal cycles of length smaller than $g$ spanned by $k$-APs in $[n]$.
  By definition, we have that
  \[
    \Pr(\cH \text { has girth at least } g) \geq
    \Pr\Big(\bigcap_{\cC \in \Theta}\{\cC \not \subset \cH\}\Big).
  \]
  Since, for all $\cC \in \Theta$, not containing $\cC$ is a decreasing event, we may
  first apply Harris's inequality (see, for example, Chapter 6 in \cite{2016.AS})
   and then~\eqref{eq:lower_bound_2_cycle} and
   \eqref{eq:lower_bound_l_cycle} to obtain 
   \[
    \Pr \Big(\bigcap_{\cC \in \Theta} \{\cC \not \subset \cH\}\Big) 
    \geq \prod_{\cC \in \Theta} \Pr (\cC \not \subset \cH) 
    = \prod_{2 \leq \ell < g} \prod_{\cC \in \Theta_{\ell}} \Pr(\cC \not \subset \cH)
    \geq (1-p^{k+1})^{|\Theta_2|} \prod_{3 \leq \ell < g} (1-p^{(k-1)
    \ell})^{|\Theta_\ell|}.
  \]
  Fixing the linking vertices   of the cycle and upper bounding the number of $k$-APs
  that contain them, we  see $|\Theta_{\ell}| \leq
  n^{\ell}k^{2\ell}$. Using this and the fact that
  $1-x \geq e^{-2x}$ for $x \leq 1/2$, we may upper bound the previous term to
  obtain
  \[
    \Pr \Big(\bigcap_{\cC \in \Theta} \{\cC \not \subset \cH\}\Big) \geq 
    \exp\Big(-2p^{k+1}n^{2}k^4 - 2\sum_{3 \leq \ell < g} p^{(k-1)\ell}
    n^{\ell}k^{2\ell} \Big) \geq \exp(-C_1 n^{(k-3)/(k-1)}),
  \]
  for some constant $C_1 > 0$ depending on $k$ and $n$ large enough in
  terms of~$k$, $\ell$ and $g$.  On the other hand,
  Theorem~\ref{thm:quant_sparse_canonical_vdw} gives 
  \[
    \Pr([n]_p\text{ is not
    }\cankvdW)\leq\exp(-cnp)\leq\exp(-cCn^{(k-2)/(k-1)}). 
  \]
  Comparing both bounds, we obtain that
  \[
    \Pr\big([n]_p\text{ is not }\cankvdW\big) <
    \Pr\big(\hkap([n]_p) \text { has girth at least $g$}\big)
  \]
  for $n$ large, and hence a set that is $\cankvdW$ and
  whose $k$-AP hypergraph has girth at least~$g$ exists.
\end{proof}

\providecommand{\bysame}{\leavevmode\hbox to3em{\hrulefill}\thinspace}
\providecommand{\MR}{\relax\ifhmode\unskip\space\fi MR }
\providecommand{\arXiv}{\relax\ifhmode\unskip\space\fi arXiv }
\providecommand{\MRhref}[2]{%
  \href{http://www.ams.org/mathscinet-getitem?mr=#1}{#2}
}
\renewcommand{\MR}[1]{%
  \href{http://www.ams.org/mathscinet-getitem?mr=#1}{MR~#1}
}
\renewcommand{\arXiv}[1]{%
  Available as \href{https://arxiv.org/abs/#1}{arXiv:#1}.
}
\providecommand{\href}[2]{#2}

\endgroup
\end{document}